\newcommand{\FF}{{\mathbb{F}}}
\newcommand{\CC}{{\mathbb{C}}}
\newcommand{\QQ}{{\mathbb{Q}}}
\newcommand{\ZZ}{{\mathbb{Z}}}
\newcommand{\bC}{{\mathbf{C}}}
\newcommand{\bG}{{\mathbf{G}}}
\newcommand{\bT}{{\mathbf{T}}}
\newcommand{\Ind}{{\operatorname{Ind}}}
\newcommand{\St}{{\operatorname{St}}}
\newcommand{\reg}{{\operatorname{reg}}}
\newcommand{\Irr}{{\operatorname{Irr}}}
\newcommand{\Lie}{{\operatorname{Lie}}}
\newcommand{\PSL}{{\operatorname{L}}}
\newcommand{\PSU}{{\operatorname{U}}}
\newcommand{\SL}{{\operatorname{SL}}}
\newcommand{\Sp}{{\operatorname{Sp}}}
\newcommand{\PSp}{{\operatorname{S}}}
\newcommand{\SO}{{\operatorname{SO}}}
\newcommand{\Om}{{\operatorname{\Omega}}}
\newcommand{\Spin}{{\operatorname{Spin}}}
\newcommand{\Gss}{{G_\mathrm{ss}^*}}
\newcommand{\Chevie}{{\sf Chevie}}
\newcommand{\tw}[1]{{}^#1\!}
\let\eps=\epsilon
\let\vhi=\varphi
\newtheorem{thm}{Theorem}[section]
\newtheorem{lem}[thm]{Lemma}
\newtheorem{cor}[thm]{Corollary}
\newtheorem{prop}[thm]{Proposition}
\theoremstyle{definition}
\theoremstyle{remark}
\newtheorem{rem}[thm]{Remark}
\newtheorem{rems}[thm]{Remarks}
\begin{document}

\title[Rigidity]
{Rational Rigidity for $E_8(p)$}

\date{\today}

\author{Robert Guralnick}
\email{guralnic@usc.edu}
\address{Department of Mathematics, University of
  Southern California, Los Angeles, CA 90089-2532, USA}
\author{Gunter Malle}
  \email{malle@mathematik.uni-kl.de}
\address{FB Mathematik, TU Kaiserslautern, Postfach 3049,
  67653 Kaisers\-lautern, Germany}

\classification{Primary 12F12,20C33;  Secondary 20E28}
\keywords{Inverse Galois problem, rigidity, Lie primitive subgroups,
  regular unipotent elements}
\thanks{The first author was partially supported by the NSF
grant DMS-1001962  and the  Simons Foundation
Fellowship 224965.  The second author gratefully acknowledges financial support
by ERC Advanced Grant 291512.}

\begin{abstract}
We prove the existence of certain rationally rigid triples in
$E_8(p)$ for good primes $p$ (i.e. $p > 5$)
 thereby showing that these groups occur as
Galois groups over the field of rational numbers. We show that these triples
give rise to rigid triples in the algebraic group and prove that they
generate an interesting subgroup in characteristic $0$. As a byproduct
of the proof, we
derive a remarkable symmetry between the character table of a finite reductive
group and that of its dual group.    We also give a short list of possible overgroups
of regular unipotent elements in exceptional groups.
\end{abstract}

\maketitle


\section{Introduction} \label{sec:intro}

The question on which finite groups occur as Galois groups over the field
of rational numbers is still wide open. Even if one restricts to the case
of finite non-abelian simple groups, only rather few types have been realized
as Galois groups over $\QQ$. These include the alternating groups, the
sporadic groups apart from $M_{23}$, and some families of groups of Lie type,
but even over fields of prime order mostly with additional congruence
conditions on the characteristic. In the present paper we show that the
infinite series of simple groups $E_8(p)$ occur as Galois groups over $\QQ$
for all good primes $p$. \par
Our paper was inspired by the recent result of Zhiwei Yun \cite{Yu} who
showed the Galois realizability of $E_8(p)$ for all sufficiently large
primes $p$, but without giving a bound.  In fact, Yun proved much more
--- he showed that $E_8$ is a motivic Galois group, answering a conjecture
of Serre. \par

Our proof relies on the well-known rigidity criterion of Belyi, Fried, Matzat
and Thompson, but in addition uses deep results mainly of Liebeck and Seitz on
maximal subgroups of
algebraic groups and from Lusztig on the parametrization of irreducible
characters of finite reductive groups, the Springer correspondence and
computations of Green functions. We also require results of Lawther on
fusion of unipotent elements in reductive subgroups. \par

Table~\ref{tab:triples} contains a description of the class triples of
the exceptional groups $G$ of Lie type which we are going to consider.
Here, the involution classes are identified by the structure of their
centralizer in $G$, while the unipotent classes are denoted as in
\cite[\S13.1]{Ca}.

\begin{table}[htbp]
\caption{Candidate classes}   \label{tab:triples}
\[\begin{array}{|r|cc|l|}
\hline
  & G_2(q)&  E_8(q)& \\
\hline
 C_1&  A_1+\tilde A_1&  D_8& \mathrm{involution}\\
 C_2&      \tilde A_1& 4A_1& \mathrm{unipotent}\\
 C_3&             G_2&  E_8& \text{regular unipotent}\\
\hline
\end{array}\]
\end{table}

Our main result is:

\begin{thm} \label{thm:rigidity}
 Let $k$ be an algebraic closure of $\FF_p$ with $p$ prime. Let $G$ be
 either   $G_2(k)$ or $E_8(k)$. Assume that $p$ is good for $G$ (i.e.,
 $p>3$ and if $G=E_8$, $p>5$). Let $C_i$, $1 \le i \le 3$, be the conjugacy
 classes described in Table~\ref{tab:triples}. Let $X$ denote the variety
 of triples in $C_1 \times C_2 \times C_3$ with product $1$. Then $G(k)$
 has a single regular orbit on $X$ and if $(x_1, x_2, x_3) \in X$,
 then $\langle x_1, x_2 \rangle \cong G(\FF_p)$.
\end{thm}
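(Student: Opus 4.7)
The plan is to apply the rigidity criterion of Belyi--Fried--Matzat--Thompson through the character-theoretic structure-constant formula in the finite group $H:=G(\FF_p)$. For $K_i:=C_i\cap H$ and fixed representatives $x_i\in K_i$, the number of triples $(y_1,y_2,y_3)\in K_1\times K_2\times K_3$ with $y_1 y_2 y_3=1$ equals
$$N=\frac{|K_1|\,|K_2|\,|K_3|}{|H|}\sum_{\chi\in\Irr(H)}\frac{\chi(x_1)\chi(x_2)\chi(x_3^{-1})}{\chi(1)}.$$
Since $Z(H)$ is trivial in both $G_2(\FF_p)$ and $E_8(\FF_p)$, the theorem reduces to showing (i) $N=|H|$, so that $X(\FF_p)$ carries a single regular $H$-orbit, and (ii) every such triple generates $H$. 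The passage from $H$ to $G(k)$ is then automatic: $X$ and the $C_i$ are defined over $\FF_p$, so by Lang's theorem each $G(k)$-orbit on $X$ meets $X(\FF_p)$, and the $G(k)$-stabilizer of a generating triple is contained in the centralizer of its image in $H$, hence trivial.

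The computational core is step (i). The decisive simplification is that in good characteristic $\chi(u_{\reg})\in\{-1,0,1\}$ for every $\chi\in\Irr(H)$, with an explicit description via Lusztig's parametrization of those $\chi$ for which this value is nonzero; this reduces the support of the sum to a manageable subset. Values of $\chi$ on the class $C_2$ (of type $4A_1$ for $E_8$ and $\tilde A_1$ for $G_2$) are then read off from Green functions via the Springer correspondence, while values on the involution class $C_1$ are obtained from Deligne--Lusztig restriction to the centralizer of type $D_8$ (respectively $A_1+\tilde A_1$). I expect the symmetry between the character tables of $H$ and of its dual group, promised in the abstract, to be essential in collapsing the contributions across Lusztig series into a closed-form evaluation.

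For step (ii), the strategy is that any proper overgroup $M\le H$ of a regular unipotent element is severely constrained. First one compiles a short list of candidate $M$ by combining the Liebeck--Seitz classification of maximal positive-dimensional subgroups of the algebraic group with a Borel--Tits type analysis of finite subgroups; producing this list of overgroups of regular unipotents is itself one of the stated byproducts of the paper. Each candidate is then eliminated using Lawther's fusion data for unipotent classes in reductive subgroups: one verifies that $M$ cannot simultaneously contain a $G$-conjugate of the $C_2$-representative $x_2$ and an involution of centralizer type $C_1$ satisfying the product relation $x_1 x_2=x_3^{-1}$. Because $p$ is prime, no subfield subgroups of $H$ arise.

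The main obstacle is the character-sum evaluation in (i): $E_8(\FF_p)$ has several thousand irreducible characters, and although the regular-unipotent vanishing trims the support dramatically, tight uniform control of $\chi(x_1)\chi(x_2)$ on the surviving characters across every Lusztig family and Harish-Chandra series is still required. Establishing and exploiting the dual-group symmetry, and organising the Green-function data coherently through the Springer correspondence, will be the essential technical work; without these reductions even a computer-algebra attack would appear infeasible.
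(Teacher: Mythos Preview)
Your overall architecture is right, but the logical flow in the paper is essentially the reverse of what you propose, and your step~(i) asks for more than is actually proved. The paper does \emph{not} evaluate the character sum exactly to get $N=|H|$. Theorem~\ref{thm:triples} only shows that
\[
  f(q)=\sum_{1\ne\chi\in\Irr(G(q))}\frac{\chi(x_1)\chi(x_2)\chi(x_3)}{\chi(1)}
\]
is a rational function in $q$ with $|f(q)|<1$ for $q$ large: one groups the semisimple characters $\chi_t$ by the conjugacy class of $C_{\bG^*}(t)$, uses Corollary~\ref{cor:valuni} for the unipotent values, Theorem~\ref{thm:sym} for the semisimple values, and then checks in \Chevie{} that the \emph{degrees} in $q$ of each packet's contribution are negative. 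No closed form is obtained; the exact identity $f(q)=0$ is recovered only \emph{a posteriori}, once the single-orbit statement is proved. So the ``collapsing to a closed-form evaluation'' you anticipate is not what happens and is not needed.

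Correspondingly, the reduction you sketch from $H=G(\FF_p)$ to $G(k)$ is not correct as written, and the paper runs the argument the other way. Lang's theorem does \emph{not} say that every $G(k)$-orbit on $X$ meets $X(\FF_p)$; it says this only for $F$-stable orbits with connected stabilizer. Even if you established $N=|H|$ and generation for $\FF_p$-triples, there could still be further $G(k)$-orbits permuted nontrivially by Frobenius or carrying nontrivial stabilizers. The paper instead proves generation over $k$ (Theorem~\ref{e8gen}): for \emph{any} triple in $X(k)$ one has $\langle x_1,x_2\rangle\cong E_8(q)$ for some $q$, hence every $G(k)$-orbit on $X$ is regular. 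The asymptotic estimate then forces $X$ to be a single irreducible component of dimension $\dim G$, so a single regular $G(k)$-orbit; Lang now applies to give a single $G(\FF_p)$-orbit on $X(\FF_p)$, and a second appeal to generation pins down $q=p$. For step~(ii) itself, your outline is on target for the Lie primitive case (Theorem~\ref{lieprim}, Corollary~\ref{cor:e8prim}), but the parabolic case is where most of the work lies: beyond Lawther's fusion tables one needs \cite{SS97} to restrict the possible Levi types, the ad~hoc nonexistence results in $\SL_n$, $\SO_{2m}$ and $\Spin_{14}$ (Lemmas~\ref{sl}--\ref{spin14}), and Scott's lemma on the adjoint module to eliminate the $E_6$ and $E_7$ parabolics.
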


Since $G(k)$ has a single regular orbit on $X$ for $k$ algebraically
closed of  good positive characteristic, it follows that the same is true if
$k$ is an algebraically closed field of
characteristic $0$.
Thus, we obtain a torsor for $G$ and indeed, we can reduce
the question of whether this torsor is trivial to the case of
$C_G(z)$ with $z \in C_3$, a regular unipotent element. Recall that
$C_G(z) \cong k^r$ where $r$ is the rank of $G$ (for $p=0$
or $p$ at least the Coxeter number of $G$). Since torsors over
connected unipotent groups are trivial, it follows that such
triples exist for any field of characteristic $0$.  It is not difficult to show
that some (and so any) such triple generates a Zariski dense
subgroup of $G(k)$ with $k$ algebraically closed of characteristic~$0$.

We can also produce
such triples over $G(\ZZ_p)$ and so show (see Section \ref{sec:char 0}):

\begin{thm} \label{thm:char 0}
 Let $k$ be an algebraically closed field of characteristic $0$.
 Let $G$ be $G_2(k)$ or $E_8(k)$. Let $X$ be the set of
 elements in $C_1 \times C_2 \times C_3$ with product~$1$. For $x \in X$,
 let $\Gamma(x)$ denote the group generated by $x$.
 \begin{enumerate}[\rm(a)]
  \item  For any $x \in X$, $\Gamma(x)$ is Zariski dense in $G(k)$.
  \item  If $k_0$ is a subfield of $k$, then $X(k_0)$ is a single
   $G(k_0)$-orbit (where $G(k_0)$ is the split group over $k_0$).
  \item  Let $m$ be the product of the bad primes for $G$ (i.e., $m=6$ in
   the first  case and $m=30$ for $E_8$) and set $R=\ZZ[1/m]$. There
   exists $x\in X(R)$ such that $\Gamma(x) \le G(R)$ and surjects onto
   $G(R/pR)$ for any good prime $p$. In particular,
   $\Gamma(x)$ is  dense in $G(\ZZ_p)$ for any good prime $p$.
 \end{enumerate}
\end{thm}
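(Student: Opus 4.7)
The plan rests on the observation, recorded between Theorems~\ref{thm:rigidity} and~\ref{thm:char 0}, that once the positive characteristic rigidity is known, $X(k)$ is a single regular $G(k)$-orbit for any algebraically closed field $k$ of characteristic~$0$. Any triple $(x_1,x_2,x_3)\in X$ will turn out (by part~(a) below) to generate a Zariski dense subgroup, so its stabilizer in $G$ is trivial and $X$ becomes a $G$-torsor, for the diagonal conjugation action, over every ring on which the three classes are defined. Parts~(b) and~(c) then amount to trivializing this torsor over the relevant base.

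For~(b), I would fix the canonical regular unipotent element $x_3:=\prod_\alpha u_\alpha(1)$, with $\alpha$ running over the simple roots of the split $\ZZ$-form of $G$, so that $x_3\in C_3(k_0)$ automatically. The subvariety
\[
  Y := \{(x_1,x_2)\in C_1\times C_2 \mid x_1x_2 = x_3^{-1}\}
\]
is a torsor under $C_G(x_3)$ via simultaneous conjugation. In good characteristic $C_G(x_3)$ is a smooth connected unipotent $k_0$-group scheme, filtered by copies of $\mathbb{G}_a$ of total dimension equal to $\operatorname{rank} G$; since $H^1(k_0,\mathbb{G}_a)=0$, this torsor is trivial and $Y(k_0)\ne\emptyset$. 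A resulting $k_0$-point of $X$ in turn trivializes the ambient $G$-torsor, so $X(k_0)=G(k_0)\cdot x$ is a single orbit.

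The hardest step is~(a). Let $H$ denote the Zariski closure of $\Gamma(x)$; it is a closed subgroup of $G$ containing the regular unipotent element $x_3=(x_1x_2)^{-1}$. Using the short list of possible overgroups of a regular unipotent element in $G_2$ or $E_8$ that the paper establishes separately, every proper closed overgroup of $x_3$ lies in a restricted collection of subsystem or semisimple subgroups. One then excludes each candidate by the same class-multiplicity argument driving Theorem~\ref{thm:rigidity}: the involution class $C_1$ and the unipotent class $C_2$ of Table~\ref{tab:triples} were chosen precisely so that no proper reductive subgroup of $G$ contains representatives of all three classes with product~$1$. Hence $H=G$.

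Finally, for~(c) the torsor trivialization carries over to $R=\ZZ[1/m]$: the element $x_3$ is already $R$-rational, $C_G(x_3)$ is a smooth connected unipotent $R$-group scheme admitting a $\mathbb{G}_{a,R}$-filtration once the bad primes are inverted, and $H^1(R,\mathbb{G}_a)=0$ for the affine scheme $\operatorname{Spec} R$. This yields $x\in X(R)$, so $\Gamma(x)\subseteq G(R)$. Reducing modulo any good prime $p$ and invoking Theorem~\ref{thm:rigidity} forces $\Gamma(x)$ to surject onto $G(\FF_p)$. Density in $G(\ZZ_p)$ is then a standard consequence: the congruence kernel of $G(\ZZ_p)\to G(\FF_p)$ is a pro-$p$ group whose graded quotients are isomorphic to $\Lie(G)(\FF_p)$, irreducible as $G(\FF_p)$-modules at good primes, so any closed subgroup surjecting onto $G(\FF_p)$ equals $G(\ZZ_p)$.
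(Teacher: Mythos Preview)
Your treatment of~(b) matches the paper exactly: fix the standard regular unipotent $z\in G(\QQ)$, observe that the slice $Y$ is a torsor under the connected unipotent group $C_G(z)$, and trivialize over any field of characteristic~$0$.

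For~(a) and~(c), however, your route diverges from the paper's, and in~(c) there is a real gap.

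\medskip
\textbf{Part (a).} The paper does \emph{not} rerun the overgroup analysis in characteristic~$0$. Instead it gives a short spreading-out argument: the triple lies in $G(A)$ for a finitely generated $\ZZ$-algebra $A$; reduce modulo a maximal ideal of large residue characteristic to land in $G(\FF_p)$, where by Theorem~\ref{thm:rigidity} the image is all of $G(\FF_p)$ and hence acts irreducibly on the adjoint module. Irreducibility then lifts to characteristic~$0$, forcing the Zariski closure (which is positive-dimensional since it contains unipotents) to be $G$. Your proposal to ``exclude each candidate by the same class-multiplicity argument'' would amount to reproving Theorem~\ref{e8gen} over~$\CC$; this is possible in principle via \cite{SS97}, but it is much longer than the paper's three-line reduction and your sketch does not actually carry it out.

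\medskip
\textbf{Part (c).} Here the paper takes a quite different path. It does \emph{not} attempt to trivialize the $C_G(z)$-torsor directly over $R=\ZZ[1/m]$. Instead it first produces $x\in X(\QQ)$, hence $x\in X(\ZZ[1/N])$ for some $N$; then, for each good prime $p\mid N$, it uses a $p$-adic lifting lemma (Lemma~\ref{lem:lifting}, based on \cite{weigel}) to find $y\in X(\ZZ_p)$, writes the conjugating element via the factorization $G(\QQ_p)=G(\ZZ[1/p])\,G(\ZZ_p)$, and thereby replaces $x$ by a conjugate lying in $X(\ZZ[1/N'])$ with $N'=N/p$. Iterating removes all good primes from the denominator.

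Your direct approach requires that $C_G(z)$ be a smooth $R$-group scheme filtered by copies of $\mathbb{G}_{a,R}$. The smoothness is plausible, but the filtration claim is not justified and is not obvious: as the paper itself notes (before Theorem~\ref{thm:char 0}), $C_G(z)\cong k^r$ only when $p=0$ or $p\ge h$, the Coxeter number. For good primes $p<h$ (e.g.\ $p=7,\dots,29$ when $G=E_8$) the regular unipotent has order $p^2$ (Table~\ref{tab:orders}), so its centralizer over $\FF_p$ is commutative unipotent of dimension~$r$ but \emph{not} isomorphic to $\mathbb{G}_a^r$. Producing a global $\mathbb{G}_{a,R}$-filtration across these fibres, and checking that the slice $Y$ is genuinely an fppf torsor over $\operatorname{Spec} R$, are nontrivial scheme-theoretic verifications that you have not supplied. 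The paper's iterative argument sidesteps all of this by working over fields and~$\ZZ_p$ only.

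Your final density statement (surjection onto $G(\FF_p)$ plus irreducibility of the graded pieces of the congruence filtration forces density in $G(\ZZ_p)$) is correct and is essentially the content of the cited result of Weigel that the paper also invokes.
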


Theorem~\ref{thm:rigidity}  implies the following result (answering the
question of Yun for $E_8$).


\begin{thm}   \label{thm:main}
 The finite simple groups $G_2(p)$ ($p\ge5$ prime)
 and $E_8(p)$ ($p\ge7$ prime), occur as Galois groups over $\QQ(t)$,
 and then also infinitely often over $\QQ$. More precisely, the triple
 $(C_1,C_2,C_3)$ of classes as in Table~\ref{tab:triples} is rationally rigid.
\end{thm}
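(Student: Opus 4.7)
The plan is to apply the standard rigidity criterion of Belyi, Fried, Matzat and Thompson to the finite simple group $G_0 := G(\FF_p)$. The criterion requires three properties of the class triple $(C_1,C_2,C_3)$: rationality of each class in $G_0$, i.e.\ closure under power maps with exponents coprime to $|G_0|$; rigidity, meaning a single $G_0$-orbit of triples $(x_1,x_2,x_3) \in C_1 \times C_2 \times C_3$ with $x_1 x_2 x_3 = 1$ that generate $G_0$; and trivial centralizer in $G_0$ of the generated subgroup. Together these produce a regular $G_0$-Galois cover of $\QQ(t)$ branched over three $\QQ$-rational points with inertia generators in the $C_i$, which is precisely the rational rigidity statement; Hilbert's irreducibility theorem then yields infinitely many realizations over $\QQ$.

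Rigidity and the generation property over $k=\bar\FF_p$ are precisely the content of Theorem~\ref{thm:rigidity}. To descend to $G_0$ itself, I would invoke Lang--Steinberg: since each $C_i$ is defined over $\FF_p$, the unique $G(k)$-orbit on $X$ is Frobenius-stable, and its point stabilizer is trivial (hence connected), so $X(\FF_p)$ is non-empty and forms a single $G_0$-orbit. Any such $\FF_p$-rational triple generates $G_0$ by Theorem~\ref{thm:rigidity}, and its centralizer in $G_0$ is trivial because both $G_2$ and $E_8$ have trivial center. For rationality of the classes, $C_1$ consists of involutions and is therefore automatically rational. The unipotent classes $C_2,C_3$ have $p$-power order, so rationality reduces to invariance under integer powers coprime to $p$; in good characteristic this is a standard consequence of the Bala--Carter classification, which labels each unipotent class by a datum intrinsic to the root system and manifestly preserved by such power maps. (For $C_3$ it is even clearer, since a power coprime to $p$ of a regular unipotent is again regular, and there is a unique regular unipotent class.)

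With rationality, rigidity, and trivial centralizer all verified, the rigidity criterion applies and Theorem~\ref{thm:main} follows; the argument treats $G_2$ and $E_8$ uniformly. The only genuine step beyond the standard application, once Theorem~\ref{thm:rigidity} is granted, is the careful descent from $k$ to $\FF_p$, and this is the point I expect to require the most attention. It is resolved by combining the triviality of the stabilizer in Theorem~\ref{thm:rigidity} with Lang--Steinberg, after which the rest is routine bookkeeping within the rigidity framework.
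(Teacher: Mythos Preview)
Your proposal is correct and follows essentially the same approach as the paper: the paper (in Theorem~\ref{rigid1} for $E_8$, and analogously for $G_2$) uses Theorem~\ref{thm:rigidity} together with Lang's theorem to conclude that $X(\FF_p)$ is a single regular $G(\FF_p)$-orbit of generating triples, notes (just before Section~2) that rationality of $C_2,C_3$ is immediate from the classification of unipotent classes and that of $C_1$ is obvious, and then invokes the rigidity criterion \cite[Thm.~I.4.8]{MM}. Your descent argument via Lang--Steinberg with trivial (hence connected) stabilizer is exactly what the paper does, just spelled out in slightly more detail.
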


\begin{rems}
(a) The case of $G_2(p)$ ($p\ge5$) had already been shown by
Feit--Fong \cite{FF} (for $p>5$) and Thompson \cite{Th} (for $p=5$).
See also \cite{DR10} \par
(b) The second author has shown that $F_4(p)$ is a
Galois group over $\QQ(t)$ whenever $p\ge5$ has multiplicative order~12
modulo~13, and that $E_8(p)$ is a Galois group over $\QQ(t)$ whenever $p\ge7$
has multiplicative order 15 or 30 modulo~31 (see
\cite[Thm.~II.8.5 and~II.8.10]{MM}).\par
(c)  There are several possible choices of triples for $F_4$ including
one suggested by Yun.   It does seem hard to verify either the character
results or the generation results required for any of these triples.
See the final section.
\end{rems}

It is directly clear from the known classification of unipotent conjugacy
classes (see e.g.~\cite[13.1]{Ca}) that the classes $C_2,C_3$ are rational,
and for class $C_1$ this is obvious. As usual, the proof of rigidity breaks
up into two quite different parts:
showing that all triples $(x_1,x_2,x_3)\in C_1\times C_2\times C_3$ with
product $x_1x_2x_3=1$ do generate $G$, and showing that there is exactly one
such triple modulo $G$-conjugation. The first statement will be shown in
Sections~\ref{sec:F4} and~\ref{sec:E8}, the second in Section~\ref{sec:struct}.

On the way we prove two results which may be of independent interest:
in Theorem~\ref{thm:sym} we note a remarkable symmetry property between the
character table of a finite reductive group and that of its dual, and in
Theorem~\ref{lieprim} we give a short list of possible Lie primitive
subgroups of simple exceptional groups containing a regular unipotent element
(in particular there are none in characteristic larger than $113$).
Combining this with the result of Saxl and Seitz \cite{SS97}, we essentially
know all proper closed subgroups of exceptional groups which contain regular
unipotent elements.

The application of our approach to the other large exceptional groups of
Lie type over prime fields fails due to the fact that for $E_6$ and $E_7$
the finite simple groups are not always the group of fixed points of a
corresponding algebraic group. In particular, the class of regular unipotent
elements in $E_7$ splits into two classes in the finite simple group, which
are never rational over the prime field, when $p>2$. In type $E_6$, again
the class of regular unipotent elements splits, and our approach for
controlling the structure constant does not yield the necessary estimates.
Note that the groups $E_6(p)$ and $\tw2E_6(p)$ are known to occur as Galois
groups for all primes $p\ge5$ which are primitive roots modulo~19
(see \cite[Cor.~II.8.8 and Thm.~II.8.9]{MM}).

Note that, on the other hand almost all families of finite simple groups are
known to occur as Galois groups over suitable (finite) abelian extensions of
$\QQ$, a notable exception being given by the series of Suzuki and Ree groups
in characteristic~2. An overview on most results in this area can be found in
the monograph \cite[Sect.~II.10]{MM}.

We thank Zhiwei Yun for asking the question and for helpful remarks. We thank
Burt Totaro for some suggestions which helped simplify the proof of
Theorem~\ref{thm:char 0}.   We thank Stefan Reiter for observing that
the original triple we considered for $F_4$ could not work by considering
the action on the $26$-dimensional.

\section{Structure Constant } \label{sec:struct}

In this section we give estimates for certain structure constants. For this
we need to collect various results on characters of finite groups of Lie type.
We introduce the following setup, where, in this section only, algebraic groups
are denoted by boldface letters. Let $\bG$ be a connected reductive linear
algebraic group over the algebraic closure of a finite field of
characteristic~$p$, and $F:\bG\rightarrow\bG$ a Steinberg endomorphism with
(finite) group of fixed points $G:=\bG^F$. We assume that all eigenvalues of
$F$ on the character group of an $F$-stable maximal torus of $\bG$ have the
same absolute value, $q$. \par
Let's fix an $F$-stable maximal torus $\bT_0$ of $\bG$. Then the conjugacy
classes of $F$-stable maximal tori of $\bG$ are naturally parametrized by
$F$-conjugacy classes in the Weyl group $W=N_\bG(\bT_0)/\bT_0$ of $\bG$,
that is, by $W$-classes in the coset $W\vhi$, where $\vhi$ denotes the
automorphism of $W$ induced by $F$. If $\bT$ is parametrized by the class of
$w\vhi$, then $\bT$ is said to be \emph{in relative position $w\vhi$} (with
respect to $\bT_0$). Note that in this case $N_G(\bT)/\bT^F\cong C_W(w\vhi)$
(see \cite[Prop.~25.3]{MT}).
\par
For $\bT\le\bG$ an $F$-stable maximal torus and $\theta\in\Irr(\bT^F)$,
Deligne and Lusztig defined a generalized character $R_{\bT,\theta}^\bG$ of
$G$. This character $R_{\bT,\theta}^\bG$ only depends on the $G$-conjugacy
class of $(\bT,\theta)$.

Its values on unipotent elements have the following property (see
\cite[Cor.~7.2.9]{Ca}):

\begin{prop}   \label{prop:valuni}
 Let $u\in G$ be unipotent. Then $R_{\bT,\theta}^\bG(u)$ is independent of
 $\theta$.
\end{prop}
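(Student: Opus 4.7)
The plan is to reduce the statement to the Deligne--Lusztig character formula, which expresses $R_{\bT,\theta}^\bG$ on an arbitrary element in terms of its Jordan decomposition. Specifically, for any $g=su \in G$ with Jordan decomposition (where $s$ is semisimple, $u$ is unipotent, and $su=us$), one has the identity
\[
R_{\bT,\theta}^\bG(su) \;=\; \frac{1}{|C_\bG^{\circ}(s)^F|}\sum_{\substack{x\in G\\ x^{-1}sx\in \bT^F}} \theta(x^{-1}sx)\, Q_{x\bT x^{-1}}^{C_\bG^{\circ}(s)}(u),
\]
where $Q$ denotes the (two-variable) Green function attached to a maximal torus of a connected reductive group, depending only on the torus (and the unipotent argument) and not on any character.

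First, I would simply specialize this formula to the case $g=u$, i.e.\ take $s=1$. Then the condition $x^{-1}sx\in \bT^F$ is automatic, the centralizer $C_\bG^{\circ}(s)$ equals $\bG$ itself, and every occurrence of $\theta$ is of the form $\theta(1)=1$. The formula therefore collapses to
\[
R_{\bT,\theta}^\bG(u) \;=\; \frac{1}{|G|}\sum_{x\in G} Q_{x\bT x^{-1}}^{\bG}(u) \;=\; Q_\bT^\bG(u),
\]
the last equality using $G$-conjugation invariance of Green functions in the torus argument (and cancellation of $|N_G(\bT)|$ against the orbit size). This expression manifestly does not involve $\theta$.

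Thus the entire content of the statement is the specialization $s=1$ of the character formula. There is no real obstacle here: the character formula is a standard consequence of the definition of $R_{\bT,\theta}^\bG$ via $\ell$-adic cohomology of Deligne--Lusztig varieties, and is available in the reference already cited, namely \cite[Ch.~7]{Ca}. I would therefore simply invoke the Deligne--Lusztig character formula (e.g.\ Theorem~7.2.8 of \emph{loc.\ cit.}), observe the trivial specialization above, and identify the resulting expression with the Green function $Q_\bT^\bG(u)$, which by construction is independent of $\theta$.
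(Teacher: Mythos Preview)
Your proposal is correct and coincides with the paper's approach: the paper does not give its own argument but simply cites \cite[Cor.~7.2.9]{Ca}, and what you have written is precisely the proof of that corollary (specialize the character formula of Theorem~7.2.8 to $s=1$, so that $\theta(1)=1$ and the value reduces to the Green function $Q_\bT^\bG(u)$).
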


Assume that $\bT$ is in relative position $w\vhi$. Then we write
$Q_{w\vhi}(u):=R_{\bT,\theta}^\bG(u)$ for this common value. In this way each
unipotent element $u\in G$ defines an $F$-class function $Q: W\rightarrow\CC$,
$w\mapsto Q_{w\vhi}(u)$, on $W$, the so-called \emph{Green function}.
By Lusztig's algorithm, the values $Q_{w\vhi}(u)$ are expressible
by polynomials in $q$, at least for good primes $p$, with $q$ in fixed
congruence classes modulo an integer $N_\bG$ only depending on the type
of~$\bG$. For $q$ in a fixed congruence class modulo $N_\bG$, we can thus write
$$Q_{w\vhi}(u)=\sum_{i\ge0}\psi_i^u(w\vhi)\,q^i$$
for suitable class functions $\psi_i^u$ on $W\vhi$, depending on $u$. (In fact,
these are known to be characters of $W\vhi$ when $C_\bG(u)$ is connected.)
We also need to understand the values of Deligne-Lusztig characters on
semisimple elements. First observe the following vanishing result:

\begin{lem}   \label{lem:sum}
 Let $H\le\Irr(\bT^F)$ be a subgroup, and $s\in\bT^F$ semisimple not in the
 kernel of all $\theta\in H$. Then
 $$\sum_{\theta\in H}R_{\bT,\theta}^\bG(s)=0.$$
\end{lem}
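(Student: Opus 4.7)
The plan is to invoke the Deligne--Lusztig character formula on semisimple elements and then exploit Pontryagin duality for the finite abelian group $\bT^F$. By the character formula~\cite[Thm.~7.2.8]{Ca}, for semisimple $s\in\bT^F$,
\[
R^\bG_{\bT,\theta}(s)=\frac{1}{|C_\bG^\circ(s)^F|}\sum_{\substack{h\in G\\ h^{-1}sh\in\bT}} Q^{C_\bG^\circ(s)}_{h\bT h^{-1}}(1)\cdot\theta(h^{-1}sh).
\]
Summing over $\theta\in H$ and interchanging the order of summation isolates the inner sum $\sum_{\theta\in H}\theta(h^{-1}sh)$.

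Set $K:=\bigcap_{\theta\in H}\ker\theta$, the subgroup of $\bT^F$ annihilated by $H$. Pontryagin duality for finite abelian groups gives $\sum_{\theta\in H}\theta(t)=|H|\cdot\mathbf{1}_K(t)$, so
\[
\sum_{\theta\in H}R^\bG_{\bT,\theta}(s)=\frac{|H|}{|C_\bG^\circ(s)^F|}\sum_{\substack{h\in G\\ h^{-1}sh\in K}} Q^{C_\bG^\circ(s)}_{h\bT h^{-1}}(1).
\]

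To conclude, one shows that the residual summation range is empty. If $h\in G$ has $h^{-1}sh\in\bT$, then $\bT$ and $h\bT h^{-1}$ are both maximal tori of $C_\bG^\circ(s)$ containing~$s$; being conjugate inside the connected reductive group $C_\bG^\circ(s)$, one may write $h=cn$ with $c\in C_\bG^\circ(s)$ and $n\in N_\bG(\bT)$, so that $h^{-1}sh=n^{-1}sn$ lies in the orbit of $s$ under $N_\bG(\bT)/\bT$. Invariance of $K$ under this Weyl-type action then forces $n^{-1}sn\notin K$ whenever $s\notin K$, and the sum collapses.

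The main obstacle will be establishing this last step cleanly, namely that no $G$-conjugate of $s$ lying in $\bT$ falls inside~$K$. The argument rests on the appropriate Weyl-stability of $K$; in the intended applications the subgroups $H$ are chosen so that $K$ is indeed preserved by $N_\bG(\bT)/\bT$, and it is this structural input that drives the vanishing.
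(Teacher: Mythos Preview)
Your route differs from the paper's. The paper invokes the Digne--Michel identity $R_{\bT,\theta}^\bG(s)\,\St(s)=\pm\Ind_{\bT^F}^{G}(\theta)(s)$ together with the nonvanishing of $\St$ on semisimple elements: summing over $\theta\in H$ yields $\pm\Ind_{\bT^F}^{G}\big(\sum_{\theta\in H}\theta\big)(s)$, where $\sum_{\theta\in H}\theta$ is $|H|$ times the indicator of $K=\bigcap_{\theta\in H}\ker\theta$. You instead unfold the Deligne--Lusztig character formula directly and isolate the same inner sum $\sum_\theta\theta(h^{-1}sh)$. Both routes are short, and both land on the identical residual question: does any $G$-conjugate of $s$ lying in $\bT^F$ fall into $K$?

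You are right to flag this as the crux. The paper's phrase ``$\reg_H$ takes value~$0$ on all non-identity elements'' passes over exactly the point you isolate, since induction from $\bT^F$ evaluates the class function at \emph{all} $G$-conjugates of $s$ landing in $\bT^F$, not only at $s$. Without $W$-stability of $K$ the conclusion can genuinely fail: take $\bG=\GL_2$ over $\FF_q$ with $q$ odd, $\bT$ the split torus, $\chi$ the quadratic character of $\FF_q^\times$, $H=\{1,\chi\otimes 1\}$, and $s=\mathrm{diag}(a,1)$ with $a$ a nonsquare; then $s\notin K$ yet $R_{\bT,1}^\bG(s)+R_{\bT,\chi\otimes 1}^\bG(s)=2+0=2$, the Weyl reflection carrying $s$ into $K$. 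So the extra hypothesis you name is needed, and your device of writing $h=cn$ with $c\in C_\bG^\circ(s)$ and $n\in N_\bG(\bT)$, then invoking $W$-invariance of $K$, is precisely what closes the argument. In the paper's intended application the relevant $K$ is $W$-stable, so both arguments go through there; but as a proof of the lemma exactly as stated, neither is complete, and your caveat is not mere pedantry.
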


\begin{proof}
According to \cite[Lemma~12.16]{DM91} we have
$$R_{\bT,\theta}^\bG(s)\cdot\St(s)=\pm\Ind_{\bT^F}^{\bG^F}(\theta)(s),$$
where $\St$ denotes the Steinberg character of $\bG^F$, and the sign only
depends on $\bT$ and $\bG$, not on $\theta$. Thus
$$\St(s)\sum_{\theta\in H}R_{\bT,\theta}^\bG(s)
  =\pm \sum_{\theta\in H}\Ind_{\bT^F}^{\bG^F}(\theta)(s)
  =\pm \Ind_{\bT^F}^{\bG^F}\big(\sum_{\theta\in H}\theta\big)(s)
  =\pm \Ind_{\bT^F}^{\bG^F}\big(\reg_H)(s) =0,$$
since the regular character $\reg_H$ of $H$ takes value~$0$ on all
non-identity elements. The claim follows since $\St$ does not vanish on
semisimple elements by \cite[Cor.~9.3]{DM91}.
\end{proof}

Now let $\bG^*$ be a group in duality with $\bG$, with corresponding Steinberg
endomorphism also denoted by $F$, and $\bT_0^*\le\bG^*$ an $F$-stable maximal
torus in duality with $\bT_0$. There is a bijection between $G$-classes of
pairs $(\bT,\theta)$ as above, and $G^*:=\bG^{*F}$-classes of pairs
$(\bT^*,t)$, where $\bT^*\le\bG^*$ denotes an $F$-stable maximal torus and
$t\in\bT^{*F}$. Two pairs $(\bT_1,\theta_1)$, $(\bT_2,\theta_2)$ are called
\emph{geometrically conjugate} if under this bijection they correspond
to pairs $(\bT_1^*,t_1)$, $(\bT_2^*,t_2)$ with $G^*$-conjugate elements $t_1$
and $t_2$.

\begin{prop}   \label{prop:valRT}
 Let $s\in G$ be semisimple. Let $(\bT,\theta)$ be in the geometric conjugacy
 class of $t\in G^*$, where $\bT\le \bG$ is in relative position $w\vhi$ with
 respect to a reference torus $\bT_0$ inside $\bC:=C_\bG^\circ(s)$.
 Let $W(s)$ denote the Weyl group of $\bC$, $W(t)$ the Weyl group of
 $C_{\bG^*}^\circ(t)$ and $W_1:=C_{W(t)}(w\vhi)$. Then
 $$R_{\bT,\theta}^\bG(s) =|\bC^F:\bT^F|_{p'}\cdot\sum_{i=1}^r
    |C_{W(t)}(w\vhi):C_{W(t)}(w\vhi)\cap W(s)^{u_i}| \cdot\theta(s^{u_i}),$$
 where $u_1,\ldots,u_r\in W(s)\backslash W/C_{W(t)}(w\vhi)$ are representatives
 for those double cosets such that ${}^{u_i}(w\vhi)\in W(s)$.
\end{prop}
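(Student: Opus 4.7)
The identity is obtained from the Deligne--Lusztig character formula at the semisimple element $s$, unfolded using Weyl-group double cosets on the one side and the duality $\Irr(\bT^F)\leftrightarrow\bT^{*F}$ on the other.

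I would start from \cite[Prop.~12.2]{DM91}, which expresses
\[
R_{\bT,\theta}^\bG(s)=\frac{1}{|\bC^F|}\sum_{\{h\in G\,:\,s\in {}^h\bT\}}Q^\bC_{{}^h\bT}(1)\,\theta({}^{h^{-1}}s).
\]
For each $h$ with $s\in{}^h\bT$, the conjugate $\bS:={}^h\bT$ is automatically a maximal torus of $\bC$ (since $s\in Z(\bC)$), and the Green function at $1$ evaluates to $\epsilon\cdot|\bC^F|_{p'}/|\bT^F|=\epsilon\cdot|\bC^F:\bT^F|_{p'}$ with a constant sign $\epsilon=\epsilon_\bC\epsilon_\bT$ the same for every such $\bS$ (they all lie in the same geometric class). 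This produces the claimed prefactor $|\bC^F:\bT^F|_{p'}$ outside the sum.

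Next I would reparametrize the inner sum in two stages. First, group the $h$'s by the conjugate torus $\bS\le\bC$: by Lang--Steinberg, the $\bC^F$-conjugacy classes of $F$-stable maximal tori of $\bC$ that are $\bG^F$-conjugate to $\bT$ correspond bijectively to those $W(s)$-classes of $v\vhi\in W(s)\vhi$ that are $W$-conjugate to $w\vhi$, i.e.\ to double cosets $W(s)\backslash\{u\in W\,:\,{}^u(w\vhi)\in W(s)\vhi\}/C_W(w\vhi)$. Second, within each torus $\bS={}^{u_i}\bT$, let $n$ range over $N_G(\bT)/\bT^F\cong C_W(w\vhi)$; this twists $\theta$ by $n$, and under the duality $\theta\leftrightarrow t$ this action matches the natural action of $C_{W^*}(w\vhi)\cong C_W(w\vhi)$ on $t\in\bT^{*F}$, whose stabilizer is $C_{W(t)}(w\vhi)$ (using connectedness of $C_{\bG^*}(t)$, as holds in the applications of the paper). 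Refining the previous double cosets by this stabilizer gives exactly the index set $u_i\in W(s)\backslash W/C_{W(t)}(w\vhi)$ with ${}^{u_i}(w\vhi)\in W(s)\vhi$ from the statement, and each $u_i$ contributes the value $\theta(s^{u_i})$.

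A final counting argument then closes the proof: the number of $h$'s producing the $u_i$-contribution factors as $|\bC^F|\cdot|\bT^F|\cdot|C_{W(t)}(w\vhi):C_{W(t)}(w\vhi)\cap W(s)^{u_i}|$, the three factors being (a) the size of the $\bC^F$-orbit of $\bS_i$, (b) the $|\bT^F|$ choices of $h$ per torus/character pair, and (c) the orbit-stabilizer index giving the number of distinct $C_W(w\vhi)$-twists of $\theta$ inside a single $C_{W(t)}(w\vhi)$-orbit. The $|\bC^F|$ cancels against the denominator of the character formula, leaving the claimed identity.

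\textbf{Main obstacle.} The technical heart is step (c): identifying $C_{W(t)}(w\vhi)\cap W(s)^{u_i}$ as the joint stabilizer in $C_{W(t)}(w\vhi)$ of the pair $(\bS_i,{}^{u_i}\theta)$, and reconciling the Lang--Steinberg double-coset counts to produce this multiplicity in closed form. One must also verify that the Green-function sign $\epsilon$ absorbed in the first step is uniform across all contributing tori, which is a direct consequence of their lying in a single geometric conjugacy class.
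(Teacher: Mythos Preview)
Your approach is essentially the paper's: both start from the Deligne--Lusztig character formula at $s$ (the paper cites \cite[Cor.~12.4]{DM91}, you cite Prop.~12.2; these coincide here since $s$ is semisimple), pull out the constant $R^\bC_{\bS,1}(1)=|\bC^F:\bS^F|_{p'}$ using centrality of $s$ in $\bC$, and then parametrize the remaining sum via Weyl-group double cosets and duality. The paper organizes the bookkeeping more directly than your two-stage refinement: it passes at once to $\bC^F$-classes of \emph{pairs} $({}^h\bT,{}^h\theta)$, observes that each contributes with multiplicity $|N_G(\bT,\theta)|/|N_C(\bT_i,\theta_i)|$, and then identifies $N_G(\bT,\theta)/\bT^F\cong C_{W(t)}(w\vhi)$ and $N_C(\bT_i,\theta_i)/\bT_i^F\cong C_{W(t)}(w\vhi)\cap W(s)^{u_i}$ via duality. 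This is exactly your ``technical heart'' (c), so there is no real obstacle beyond what the paper itself does.

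Your final counting paragraph has a slip worth fixing. The number of $h\in G$ in the $u_i$-fibre is
\[
\frac{|\bC^F|}{|N_C(\bT_i,\theta_i)|}\cdot|N_G(\bT,\theta)|
=|\bC^F|\cdot\bigl|C_{W(t)}(w\vhi):C_{W(t)}(w\vhi)\cap W(s)^{u_i}\bigr|,
\]
not $|\bC^F|\cdot|\bT^F|$ times that index: your item (a) should be the $\bC^F$-orbit of the pair $(\bS_i,\theta_i)$, of size $|\bC^F|/|N_C(\bS_i,\theta_i)|$, and (b) should be $|N_G(\bT,\theta)|=|\bT^F|\cdot|C_{W(t)}(w\vhi)|$ choices of $h$ per pair; the two $|\bT^F|$'s then cancel. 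With this correction your computation agrees with the paper's. Your caution about the uniform sign $\epsilon_\bC\epsilon_\bT$ is well-placed (all contributing tori are $G$-conjugate to $\bT$, hence carry the same $\FF_q$-rank sign); the paper silently suppresses it, which is harmless since the proposition is only used later for degree estimates.
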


\begin{proof}
By \cite[Cor.~12.4]{DM91} we have
$$R_{\bT,\theta}(s)=\frac{1}{|\bC^F|}
    \sum_{g\in G\atop s\in{}^g\bT^F}R_{{}^g\bT,{}^g\theta}^\bC(s).$$
Now $s\in(^g\bT)^F$ if and only if $^g\bT\subseteq\bC$. Let
$(\bT_1,\theta_1),\ldots,(\bT_r,\theta_r)$ be a system of representatives of
the $C$-classes of $G$-conjugates of $(\bT,\theta)$ with first component
contained in $\bC$. Let
$N_G(\bT,\theta):=\{g\in N_G(\bT)\mid {}^g\theta=\theta\}$ denote the
stabilizer of $(\bT,\theta)$ in $G$, and similarly define
$N_C(\bT_i,\theta_i)$, the stabilizer of $(\bT_i,\theta_i)$ in $C$. Then
using $|N_G(\bT_i,\theta_i)|=|N_G(\bT,\theta)|$ we clearly have
$$R_{\bT,\theta}(s)
  =\sum_{i=1}^r\frac{|N_G(\bT,\theta)|}{|N_C(\bT_i,\theta_i)|}
  R_{\bT_i,\theta_i}^\bC(s).$$
 \par
Let $(\bT_1^*,t_1),\ldots,(\bT_r^*,t_r)$ be a system of representatives of the
$\bC^{*F}$-classes of $G^*$-conju\-gates of $(\bT^*,t)$ with first component
in $\bC^*$. Write $w_i\vhi \in W(t_i)\cap W(s)$ for the relative position
of $\bT_i^*$, and let $u_i\in W(s)\backslash W/C_{W(t)}(w\vhi)$ such that
${}^{u_i}(w\vhi,W(t))=(w_i\vhi,W(t_i))$.
Now $N_G(\bT,\theta)$ is an extension of $\bT^F$ by the subgroup
of $N_G(\bT)/\bT^F$ fixing $\theta$, which under the above duality bijection
is isomorphic to $C_W(w\vhi)\cap W(t)=C_{W(t)}(w\vhi)$.
Similarly $N_C(\bT_i,\theta_i)$ is an extension of $\bT_i^F$ by the
subgroup of $N_C(\bT_i)/\bT_i^F$ fixing $\theta_i$, which is isomorphic to
$$C_{W(t_i)}(w_i\vhi)\cap W(s)={}^{u_i}(C_{W(t)}(w\vhi))\cap W(s)
  \cong C_{W(t)}(w\vhi)\cap W(s)^{u_i}.$$
Since $s$ lies in the centre of $\bC$ we have
$$R_{\bT_i,\theta_i}^\bC(s) =R_{\bT_i,1}^\bC(1)\,\theta_i(s)
                            =|\bC^F:\bT_i^F|_{p'}\cdot\theta_i(s),$$
where the first equality holds by \cite[Prop.~7.5.3]{Ca}. The claim follows
as $|\bT_i^F|=|\bT^F|$.
\end{proof}


We next compute some values of semisimple characters. For any semisimple
element $t\in G^*:=\bG^{*F}$ there is an associated \emph{semisimple character}
$\chi_t$ of $G$, depending only on the $G^*$-class of
$t$, defined as follows: Let $W(t)$ denote the Weyl group of the centralizer
$C_{\bG^*}^\circ(t)$. Let $v\vhi\in W\vhi$ denote the automorphism of $W(t)$
induced by $F$. As explained above, to any pair $(\bT^*,t)$ with
$\bT^*\le C_{\bG^*}^\circ(t)$ an $F$-stable maximal torus there corresponds
by duality a pair $(\bT,\theta)$ consisting of an $F$-stable maximal torus
$\bT\le\bG$ (in duality with $\bT^*$) and $\theta\in\Irr(\bT^F)$, up to
$G$-conjugation. We then write
$R_{\bT^*,t}^\bG:=R_{\bT,\theta}^\bG$. Then by \cite[Def.~14.40]{DM91}
$$\chi_t=\pm\frac{1}{|W(t)|}\sum_{w\in W(t)}R_{\bT_{wv\vhi}^*,t}^\bG\,,$$
where $\bT_{wv\vhi}^*$ denotes an $F$-stable maximal torus in relative position
$wv\vhi$ to $\bT_0^*$, and where the sign only depends on $C_{\bG^*}(t)$.
This semisimple character is irreducible if $C_{\bG^*}(t)$ is connected
(see \cite[Prop.~14.43]{DM91}), so in particular if $\bG$ has
connected center. \par
Thus, $\chi_t(g)$ is nothing else but the multiplicity of the trivial
$F$-class function on $W(t)$ in the $F$-class function on $W(t)$ which maps
an element $w\in W(t)$ to $R_{\bT^*,t}^\bG(g)$, where
$\bT^*\le C_{\bG^*}^\circ(t)$ is an $F$-stable maximal torus in relative
position $wv\vhi$. For unipotent elements this gives:

\begin{cor}   \label{cor:valuni}
 Let $u\in G$ be unipotent, and $Q_{w\vhi}(u)=\sum_{i\ge0}\psi_i^u(w\vhi)\,q^i$
 for $w\in W$ and $q$ in a fixed congruence class modulo $N_\bG$. Then
 $$\chi_t(u)
   =\pm\sum_{i\ge0}\left\langle\psi_i^u|_{W(t)v\vhi},1\right\rangle_{W(t)v\vhi}\,q^i.$$
\end{cor}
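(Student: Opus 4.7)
The plan is to evaluate the formula for $\chi_t$ from the preceding paragraph on a unipotent element $u$ and reduce everything to Green functions via Proposition~\ref{prop:valuni}.

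First, I would start from the expression
$$\chi_t=\pm\frac{1}{|W(t)|}\sum_{w\in W(t)}R_{\bT_{wv\vhi}^*,t}^\bG,$$
where $\bT_{wv\vhi}^*\le C_{\bG^*}^\circ(t)$ is in relative position $wv\vhi$ (with respect to $\bT_0^*$), and its dual torus $\bT\le\bG$ is correspondingly in relative position $wv\vhi$ with respect to $\bT_0$. Evaluating at the unipotent element $u$ and invoking Proposition~\ref{prop:valuni}, the value $R_{\bT_{wv\vhi}^*,t}^\bG(u)$ is independent of the linear character associated with $t$ and therefore equals the Green function value $Q_{wv\vhi}(u)$. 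Thus
$$\chi_t(u)=\pm\frac{1}{|W(t)|}\sum_{w\in W(t)}Q_{wv\vhi}(u).$$

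Next, substituting the assumed polynomial expansion $Q_{wv\vhi}(u)=\sum_{i\ge0}\psi_i^u(wv\vhi)\,q^i$ and interchanging the two (finite) sums yields
$$\chi_t(u)=\pm\sum_{i\ge0}\Bigl(\frac{1}{|W(t)|}\sum_{w\in W(t)}\psi_i^u(wv\vhi)\Bigr)q^i.$$
Finally, I would recognize the parenthesized average as the inner product of the $F$-class function $\psi_i^u|_{W(t)v\vhi}$ with the trivial $F$-class function on the coset $W(t)v\vhi$ (note that $v\vhi$ induces an automorphism of $W(t)$ since $C_{\bG^*}^\circ(t)$ is $F$-stable, so this coset makes sense and carries the standard inner product of $F$-class functions). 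This immediately produces the claimed formula.

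There is no real obstacle: the argument is a direct manipulation, once one believes the key input, namely Proposition~\ref{prop:valuni}. The only mild care needed is to check that the duality between $(\bT,\theta)$ and $(\bT^*,t)$ preserves the relative position parameter $wv\vhi$, so that indexing the Green functions by $W(t)\vhi$-elements is consistent; and to interpret the averaged sum over the coset $W(t)v\vhi$ as the standard inner product pairing an $F$-class function with the trivial one (the same convention used implicitly in writing $\chi_t$ as an average).
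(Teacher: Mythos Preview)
Your proposal is correct and follows essentially the same approach as the paper: both evaluate the defining formula for $\chi_t$ at $u$, replace $R_{\bT_{wv\vhi}^*,t}^\bG(u)$ by $Q_{wv\vhi}(u)$ via Proposition~\ref{prop:valuni}, insert the polynomial expansion in $q$, and identify the resulting average over $W(t)$ with the scalar product against the trivial $F$-class function on the coset $W(t)v\vhi$. Your additional remarks about duality preserving the relative position and about the coset inner product being well-defined are correct and merely make explicit what the paper leaves implicit.
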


\begin{proof}
The above formula for $\chi_t$ and Proposition~\ref{prop:valuni} give
$$\chi_t(u)=\pm\frac{1}{|W(t)|}\sum_{w\in W(t)}Q_{wv\vhi}(u)
  =\pm\sum_{i\ge0}\frac{1}{|W(t)|}\sum_{w\in W(t)}\psi_i^u(w\vhi)\,q^i.$$
As pointed out above the inner term is just the scalar product of the trivial
character with $\psi_i^u$ restricted to the coset $W(t)v\vhi$.
\end{proof}

For example, if $u\in G$ is regular unipotent, then $Q_{w\vhi}(u)=1$ for all
$w\vhi$ by \cite[Prop.~8.4.1]{Ca}, and thus
$\chi_t(u)=\pm\left\langle 1,1\right\rangle_{W(t)v\vhi}=\pm1$.

Let's point out the following remarkable symmetry between the 'semisimple
parts' of character tables of dual groups. For this, we embed $\bG$ into a
connected reductive group $\hat\bG$ with connected center and having the same
derived subgroup as $\bG$, and with an extension $F:\hat\bG\rightarrow\hat\bG$
of $F$ to $\hat\bG$, which is always possible. Then an irreducible character
of $G$ is called semisimple, if it is a constituent of the restriction to $G$
of a semisimple character of $\hat G:={\hat\bG}^F$. By a result of Lusztig,
restriction of irreducible characters from $\hat\bG$ to $\bG$ is multiplicity
free. Note
that all $G$-constituents of a given semisimple character of $\hat G$ take the
same value on all semisimple elements of $G$ since they have the same scalar
product with all Deligne--Lusztig characters, and the characteristic functions
of semisimple conjugacy classes are uniform.

\begin{thm}   \label{thm:sym}
 Let $s\in G$, $t\in G^*$ be semisimple. Then
 $$|C_{\bG^*}(t)^F|_{p'}\,\chi_t(s)=|C_\bG(s)^F|_{p'}\,\chi_s(t).$$
\end{thm}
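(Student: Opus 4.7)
The plan is to expand the left-hand side by substituting the averaging definition
$\chi_t = \pm|W(t)|^{-1}\sum_{w\in W(t)} R^\bG_{\bT_{wv\vhi},\theta_t}$
and then applying Proposition~\ref{prop:valRT} to each Deligne--Lusztig value, obtaining a double sum indexed by Weyl-group data. Performing the same reduction to $|C_\bG(s)^F|_{p'}\,\chi_s(t)$ on the right-hand side, the identity becomes a symmetry statement between two formally identical double sums with the roles of $s$ and $t$ interchanged, and can be proved by exhibiting an index-preserving bijection that matches the weights term by term.

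Concretely, plugging in Proposition~\ref{prop:valRT} (with the $w\vhi$ of that statement taken to be $wv\vhi$) gives
\begin{align*}
|C_{\bG^*}(t)^F|_{p'}\,\chi_t(s)
 &= \pm\frac{|C_{\bG^*}(t)^F|_{p'}\,|\bC^F|_{p'}}{|W(t)|}
    \sum_{w\in W(t)}\frac{1}{|\bT_{wv\vhi}^F|_{p'}}\\
 &\quad\times \sum_u |C_{W(t)}(wv\vhi):C_{W(t)}(wv\vhi)\cap W(s)^u|\,\theta_t(s^u),
\end{align*}
where $u$ ranges over double cosets in $W(s)\backslash W/C_{W(t)}(wv\vhi)$ with ${}^u(wv\vhi)\in W(s)$, and $\theta_t$ is the character of $\bT^F$ dual to $t$. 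An identical expansion, with $s$ and $t$ (and $\bG,\bG^*$) swapped, applies to $|C_\bG(s)^F|_{p'}\,\chi_s(t)$.

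The core step is to match the two sums. For each pair $(w,u)$ above set $w'\vhi := {}^u(wv\vhi) \in W(s)v'\vhi$, where $v'\vhi$ is the twist of $F$ on $W(s)$; the assignment $(w,u)\mapsto (w',u^{-1})$ is a bijection onto the pairs indexing the expansion of $|C_\bG(s)^F|_{p'}\chi_s(t)$. Under this bijection three matchings occur: the symmetric duality pairing $\bT^F\times\bT^{*F}\to\CC^\times$ yields $\theta_t(s^u)=\langle s^u,t\rangle=\langle s,t^{u^{-1}}\rangle=\theta_s(t^{u^{-1}})$; the dual tori $\bT_{wv\vhi}$ and $\bT^*_{w'v'\vhi}$ have equal $F$-fixed-point orders; and the Weyl-group index $|C_{W(t)}(wv\vhi):C_{W(t)}(wv\vhi)\cap W(s)^u|$ equals its $(s,t)$-swapped counterpart, since under conjugation by $u$ the subgroup $C_{W(t)}(wv\vhi)\cap W(s)^u$ becomes $C_{W(s)}(w'v'\vhi)\cap W(t)^{u^{-1}}$. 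Combining these three matchings with the symmetric prefactors $|C_\bG(s)^F|_{p'}|C_{\bG^*}(t)^F|_{p'}$ (which, after accounting for $|W(t)|$ versus $|W(s)|$, enters symmetrically since each $W(t)$-orbit matches a unique $W(s)$-orbit) gives the identity.

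The principal obstacle is the careful combinatorial bookkeeping to verify that the bijection truly preserves all the weights — in particular, that the sign $\pm$ (which depends on $C_{\bG^*}(t)$ on one side and on $C_\bG(s)$ on the other) agrees. A secondary issue is the distinction between $C_{\bG^*}(t)^F$ and $C^\circ_{\bG^*}(t)^F$: the definition of $\chi_t$ uses only the connected centralizer, so one must argue that the component-group factor $|C_{\bG^*}(t)^F:C^\circ_{\bG^*}(t)^F|$ enters symmetrically in $(s,t)$; for the main application ($\bG=E_8$, which has trivial centre and hence connected centralizers of semisimple elements in $\bG^*$ as well) this distinction disappears.
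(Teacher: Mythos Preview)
Your strategy is \emph{different} from the paper's, and as written it is an outline with acknowledged gaps rather than a complete proof. Two concrete problems:

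\begin{itemize}
\item The claimed equality of indices
\[
|C_{W(t)}(wv\vhi):C_{W(t)}(wv\vhi)\cap W(s)^u|
   \;=\;|C_{W(s)}(w'v'\vhi):C_{W(s)}(w'v'\vhi)\cap W(t)^{u^{-1}}|
\]
is not in general true. Conjugation by $u$ does identify the two intersections in the denominators, but there is no reason for $|C_{W(t)}(wv\vhi)|$ and $|C_{W(s)}(w'v'\vhi)|$ to coincide: $wv\vhi$ and $w'v'\vhi$ are $W$-conjugate, but $W(t)$ and $W(s)$ are unrelated parabolic-type subgroups. What \emph{is} symmetric is the combined weight $|C_{W(t)}(wv\vhi)\cap W(s)^u|^{-1}$ obtained only \emph{after} you also absorb the factor $|W(t)|^{-1}$ and the multiplicity of $w$ in its $W(t)$-class. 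You gesture at this (``after accounting for $|W(t)|$ versus $|W(s)|$\ldots'') but never actually carry it out; as stated, the term-by-term matching is false.
\item The assignment $(w,u)\mapsto(w',u^{-1})$ is not well defined on the index set you wrote down: $u$ is only a double-coset representative, and replacing $u$ by $au$ with $a\in W(s)$ changes $w'v'\vhi={}^u(wv\vhi)$ by a $W(s)$-conjugation. So the bijection can at best be between pairs (class of $wv\vhi$ in $W(t)$, double coset) and the analogous data on the other side, and you then have to check multiplicities. You also leave the global sign unresolved.
\end{itemize}

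The paper avoids all of this Weyl-group bookkeeping entirely. It expresses $\chi_t(s)$ via the characteristic function $\psi_s$ of the semisimple class of $s$, using that $\langle R_{\bT,\theta},\chi_t\rangle$ equals $\eps_{\bC'}$ when $(\bT,\theta)$ lies in the geometric class of $t$ and vanishes otherwise. This yields
\[
|{\bC'}^F|_{p'}\,\chi_t(s)
 =\eps_\bC\eps_{\bC'}\,\frac{|\bC^F|_{p'}|{\bC'}^F|_{p'}}{|G|}
   \sum_{s'\sim s}\ \sum_{\substack{(\bT,\theta)\sim t\\ s'\in\bT}}\eps_\bT\,\theta(s')^{-1},
\]
and now the symmetry in $(s,t)$ is immediate from duality: replacing each triple $(s',\bT,\theta)$ by its dual $(t',\bT^*,\sigma)$ exchanges the two conditions ``$s'\sim s$'' and ``geometric class $t$'' and preserves $\theta(s')=\sigma(t')$, while the signs $\eps_\bC,\eps_{\bC'}$ already appear symmetrically. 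Proposition~\ref{prop:valRT} is not used at all. Your route could perhaps be pushed through, but the paper's argument is both shorter and handles the sign and the component-group issue uniformly.
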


\begin{proof}
Write $\bC:=C_\bG(s)$ and $\bC':=C_{\bG^*}(t)$.
By \cite[Prop.~7.5.5]{Ca} the characteristic function of the class of $s$
is given by
$$\psi_s=\eps\frac{1}{|\bC^F|_{p}\,|\bC^F|}
    \sum_{(\bT,\theta)\atop s\in\bT}\eps_\bT\,\theta(s)^{-1}R_{\bT,\theta},$$
where the sum ranges over pairs $(\bT,\theta)$ consisting of an $F$-stable
maximal torus $\bT$ of $\bG$ containing $s$ and some $\theta\in\Irr(\bT^F)$,
and where $\eps:=\eps_\bC$ is a sign. (Note that
$|\bC^{\circ F}|_{p}=|\bC^F|_{p}$ always.) Now for any character $\rho$ of $G$
we have $\rho(s)=|\bC^F|\,\langle\psi_s,\rho\rangle$, so that
$$\chi_t(s)=\eps\frac{1}{|\bC^F|_{p}}
  \sum_{(\bT,\theta)\atop s\in\bT}\eps_\bT\,\theta(s)^{-1}
  \langle R_{\bT,\theta},\chi_t\rangle.$$
Now $\langle R_{\bT,\theta},\chi_t\rangle$ is non-zero if and only if
$(\bT,\theta)$ lies in the geometric conjugacy class parametrized by $t$, and
in this case it equals~$\eps':=\eps_{\bC'}$. Indeed, this equality is true for
the group $\hat\bG$ with connected center, and then remains true for $\chi_t$
since the restriction to $G$ is multiplicity free (see \cite[Prop.~5.1]{Lu88}).
So
$$\chi_t(s)=\eps\eps'\frac{1}{|\bC^F|_{p}}
  \sum_{(\bT,\theta)\sim t\atop s\in\bT}\eps_\bT\,\theta(s)^{-1}.$$
Summing over the whole conjugacy class of $s$ we get
$$|G|\chi_t(s)=\eps\eps'\,|\bC^F|_{p'} \sum_{s'\sim s}
  \sum_{(\bT,\theta)\sim t\atop s'\in\bT}\eps_\bT\,\theta(s')^{-1},$$
whence
$$|{\bC'}^F|_{p'}\,\chi_t(s)
  =\eps\eps'\,\frac{|\bC^F|_{p'}\,|{\bC'}^F|_{p'}}{|G|}
  \sum_{s'\sim s}\sum_{(\bT,\theta)\sim t\atop s'\in\bT}\eps_\bT\,\theta(s')^{-1}.
$$
But this last expression on the right hand side is symmetric in $s,t$:
Let $(\bT,\theta)$ be in the geometric conjugacy class of $t$ and $s'\in\bT^F$.
Let $(\bT^*,t')$ be dual to $(\bT,\theta)$ in the sense of
\cite[Prop.~13.13]{DM91}, so $t'\in\bT^{*F}$ which is conjugate to $t$.
Furthermore $s'$ defines an element
$\sigma\in\Irr(\bT^{*F})$, and $s'\in\bT^F$ is equivalent to the fact that
$(\bT^*,t')$ is in the geometric conjugacy class of $s'$, hence of $s$.
By construction $N_G(\bT,\theta)/\bT^F$ equals
$N_{G^*}(\bT^*,t')/\bT^{*F}$, so since $\bT^{*F}$ has the same order as $\bT^F$,
the number of $G$-conjugates of $(\bT,\theta)$ and of $G^*$-conjugates of
$(\bT^*,t')$ agree. Thus instead of summing over
triples $(s',\bT,\theta)$ we may sum over the dual triples $(t',\bT^*,\sigma)$,
with $t'\sim t$, and $\sigma'\in\Irr(\bT^{*F})$, so that
$\theta(s')=\sigma(t')$.
\end{proof}

\begin{rem}
For every semisimple element $t\in G^*:=\bG^{*F}$ there is also a \emph{regular
character}
$$\chi_t^\reg=\pm\frac{1}{|W(t)|}\sum_{w\in W(t)}\eps_{\bT_w^*}
  R_{\bT_w^*,t}^\bG$$
of $G$ (see \cite[Def.~14.40]{DM91}), where $\bT_w^*$ denotes an $F$-stable
maximal torus in relative position $wv\vhi$ to $\bT_0^*$, $\eps_{\bT_w^*}$ is
a sign, and where the global sign only depends on $C_{\bG^*}(t)$.
This regular character is irreducible if $C_{\bG^*}(t)$ is connected
(see \cite[Prop.~14.43]{DM91}), so in particular if $\bG$ has
connected center.
Entirely analogously to Theorem~\ref{thm:sym} one can show that
$$|C_{\bG^*}(t)^F|_{p'}\,\chi_t^\reg(s)=|C_\bG(s)^F|_{p'}\,\chi_s^\reg(t)$$
for all semisimple $s\in G$, $t\in G^*$.
\end{rem}

\begin{thm}   \label{thm:triples}
 Let $G=G(q)$ be one of the finite simple groups of Lie type in
 Table~\ref{tab:triples}, with $q=p^f$ a power of a good prime $p$ for $G$.
 Let $x\in G$ be an involution, $y\in G$ a unipotent element as indicated
 in the table, and $z$ a regular unipotent element. Set
 $$f(q):=  \sum_{1\ne\chi\in\Irr(G)}\frac{\chi(x)\chi(y)\chi(z)}{\chi(1)}.$$
 Then $f(q)$ is  a rational function in $q$, for all $q$ in a
 fixed residue class modulo a sufficient large integer only depending on
 the type of $G$ and $|f(q)| < 1$ for $q$ sufficiently large.
\end{thm}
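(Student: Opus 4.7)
My plan is to organise the sum $f(q)$ by Lusztig's Jordan decomposition, then apply the character-theoretic machinery of this section family by family. Under the goodness hypothesis, every $\chi \in \Irr(G)$ has the form $\chi_{s,\lambda}$ with $s$ a semisimple class in $G^* \cong G$ (both $G_2$ and $E_8$ are self-dual with connected centre) and $\lambda$ a unipotent character of $C_{G^*}(s)^F$, satisfying $\chi_{s,\lambda}(1) = [G:C_{G^*}(s)^F]_{p'}\lambda(1)$. This sorts $\Irr(G)$ into finitely many types indexed by $F$-conjugacy classes of pseudo-Levi subgroups of $\bG^*$, and within each type the centralizer orders, Green functions (Corollary~\ref{cor:valuni}) and double-coset sums (Proposition~\ref{prop:valRT}) are polynomials in $q$ on each residue class modulo the integer $N_\bG$ of Lusztig's algorithm. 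Consequently $f(q)$ is a rational function of $q$ on each such residue class.

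Next I would compute the three character values. For the regular unipotent $z$, the remark after Corollary~\ref{cor:valuni} gives $Q_{w\vhi}(z) = 1$ identically, so $\chi_s(z) = \pm 1$ for every semisimple character; more generally, the Jordan-decomposition compatibility on unipotent elements (valid for good $p$) shows that $|\chi_{s,\lambda}(z)|$ is bounded by a constant depending only on the pseudo-Levi type of $C_{\bG^*}(s)$. For the involution $x$, Proposition~\ref{prop:valRT} together with the semisimple character formula preceding Theorem~\ref{thm:sym}, transferred to the dual side via Theorem~\ref{thm:sym}, expresses $\chi_{s,\lambda}(x)$ as a polynomial in $q$ whose degree is controlled by $\dim x^G$. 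For $y$, Corollary~\ref{cor:valuni} realises $\chi_{s,\lambda}(y)$ as a weighted sum of Green functions, tabulated for the exceptional groups in \Chevie.

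With these inputs in hand the bound $|f(q)| < 1$ for $q$ large will follow from splitting $f(q)$ into the unipotent part (series $s = 1$) and the non-unipotent part. In a non-unipotent series the denominator $\chi_{s,\lambda}(1)$ contains the polynomial factor $[G:C_{G^*}(s)^F]_{p'} \sim q^{\dim \bG - \dim C_\bG(s)}$, which, running through the finite list of proper pseudo-Levi centralizer types of $\bG^*$, dominates the corresponding growth of the numerator $|\chi_{s,\lambda}(x)\chi_{s,\lambda}(y)\chi_{s,\lambda}(z)|$; after summing the polynomially-many semisimple classes of each type the total non-unipotent contribution to $f(q)$ is $O(q^{-1})$.

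The main obstacle is the unipotent contribution
$$f_{\mathrm{uni}}(q) := \sum \frac{\lambda(x)\lambda(y)\lambda(z)}{\lambda(1)}$$
(with $\lambda$ ranging over the nontrivial unipotent characters of $G$), where no such dominant denominator is present. My plan here is to use \Chevie, which carries the Green function tables and Lusztig's parametrization of unipotent characters of $G_2$ and $E_8$: for $G_2$ the sum has ten terms and can be inspected by hand, while for $E_8$ it has $112$ terms and yields an explicit rational function of $q$ which, combined with the $O(q^{-1})$ non-unipotent contribution, gives $|f(q)| < 1$ for $q$ sufficiently large. As stressed in the introduction, the corresponding computation fails for natural candidate triples in $F_4$, $E_6$, and $E_7$, which is why our method does not extend uniformly to those groups.
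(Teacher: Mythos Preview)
Your proposal misses the single most important simplification in the paper's argument: for good $p$, \emph{only the semisimple characters} $\chi_t$ are nonzero on the regular unipotent class. This is \cite[Cor.~8.3.6]{Ca}, invoked at the very start of the paper's proof. Thus the entire sum $f(q)$ runs only over the $\chi_t$ (one per semisimple class $t\in G^*$), not over all $\chi_{s,\lambda}$. In particular your ``main obstacle'' $f_{\mathrm{uni}}(q)$ is no obstacle at all: among unipotent characters the only semisimple one is the trivial character, which is excluded from the sum, so $f_{\mathrm{uni}}(q)=0$ identically. The proposed $112$-term \Chevie{} computation for $E_8$ is unnecessary.

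Once restricted to semisimple characters, the paper groups the $t$ into finitely many equivalence classes $A$ by centralizer type, and then the numerator--denominator comparison becomes a concrete degree count: the degree $d_u(A)$ of $\chi_t(y)\chi_t(z)/\chi_t(1)$ comes from Corollary~\ref{cor:valuni}, the degree $d_s(A)$ of $\chi_t(x)$ from Theorem~\ref{thm:sym}, and $d(A)=\dim Z(C_{\bG^*}(t))$ counts the classes in $A$. A crucial extra ingredient you do not mention is Lemma~\ref{lem:sum}: whenever the involution $x$ (viewed dually as an order-$2$ character) is nontrivial on $Z(C_{\bG^*}(t))^F$, the sum $\sum_{t\in A}\chi_t(x)$ collapses against the complement of $A$ in the full torus and one gains a further factor of $q^{-1}$. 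The paper then checks, case by case in \Chevie, that the resulting total degree is negative for every $A$. Your sentence ``the denominator dominates the numerator'' is standing in for exactly this computation, but without the reduction to semisimple characters and without the cancellation from Lemma~\ref{lem:sum} it is not clear that the inequality actually holds for all types.

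Finally, your closing remark about $F_4$ is inaccurate: as Section~\ref{sec:F4} shows, the character-theoretic estimate \emph{does} go through for the $F_4$ triple; what fails there is generation, not the structure-constant bound.
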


\begin{proof}
Let $\bG$ denote a simple algebraic group of exceptional type defined
over $\FF_q$ and $F:\bG\rightarrow\bG$ a Steinberg endomorphism so that
$G=\bG^F$. \par
In order to investigate the sum, we make use of Lusztig's theory of characters.
We argue for all $q$ in a fixed congruence class modulo $N_\bG$ (see above).
First of all, since we assume that $p$ is a good prime for $G$, it follows
that only the semisimple characters of $G$ do not vanish on the class $[z]$
of regular unipotent elements, and the semisimple characters take value
$\pm1$ on that class (see \cite[Cor.~8.3.6]{Ca}). Since $\bG$ has connected
center, the dual group $\bG^*$ is of simply connected type, hence all
semisimple elements of $\bG^*$ have connected centralizer. Thus, the
semisimple characters of $G$ are in one-to-one correspondence with the
$F$-stable semisimple conjugacy classes of $\bG^*$, and we write
$\chi_t$ for the semisimple character indexed by (the class of) a semisimple
element $t\in\Gss$. \par
Let's say that two semisimple elements of $\bG^{*F}$ are equivalent if their
centralizers in $\bG^{*F}$ are conjugate. Then it is known that the number of
equivalence classes is bounded independently of $q$, and can be computed purely
combinatorially from the root datum of $\bG$ (see e.g. \cite[Cor.~14.3]{MT}).
Now note that if $t_1,t_2\in\Gss$ are equivalent,
then $\chi_{t_1}$ and $\chi_{t_2}$ agree on all unipotent elements, since
by the formula in Corollary~\ref{cor:valuni} the value of $\chi_t$ only
depends on $C_{\bG^*}(t)$. Thus in order to prove the claim it suffices to show
that for each of the finitely many equivalence classes $A$ of semisimple
elements in $\Gss$ up to conjugation we have
$$\Big|\frac{\chi(y)\chi(z)}{\chi(1)}\sum_{t\in A}\chi_t(x)\Big|=O(q^{-1}),$$
where $\chi(u):=\chi_t(u)$ denotes the common values of all $\chi_t$, $t\in A$,
on a unipotent element $u$.
For this, we compute the degree $d_u(A)$ in $q$ of the
rational function $\frac{\chi(y)\chi(z)}{\chi(1)}$ explicitly from the known
values of the Green functions (see Lusztig \cite{LuV} and Spaltenstein
\cite{Spa}) using Corollary~\ref{cor:valuni}. This is a purely mechanical
computation with reflection cosets inside the Weyl group of $\bG$ and can be
done in \Chevie{} \cite{MChev} for example.
\par
It remains to control the sums $\sum_{t\in A}\chi_t(x)$, for $A$ an equivalence
class of semisimple elements (up to conjugation). Let's fix $t_0\in A$ and
set $\bC_A=C_{\bG^*}^\circ(t_0)$. By duality, we may interpret $s$ as a linear
character (of order~2) on all maximal tori of $\bC_A$. First of all, since
$C_\bG(s)$ has finitely many classes of maximal tori, and each torus only
contains finitely many involutions (conjugate to $x$), there are only finitely
many possibilities for the values $\{\chi_t(s)\mid t\in A\}$, as a
polynomial in $q$. Using \Chevie{} again, we can calculate the maximal
degree $d_s(A)$ in $q$ of any such polynomial from
Theorem~\ref{thm:sym}. Secondly, the number of elements
in $A$ is a polynomial in $q$ of degree $d(A):=\dim Z(\bC_A)$ since the set
$$\{t\in Z(\bC_A)\mid C_{\bG^*}^\circ(t)=\bC_A\}$$
is dense in $Z(\bC_A)$ (see \cite[Ex.~20.11]{MT}). But whenever there is
some $t\in A$ not in the kernel of $s$, then
$\sum_{t\in Z(\bC_A)^F}\chi_t(s)=0$ by Lemma~\ref{lem:sum}. So
$\sum_{t\in A}\chi_t(s)=-\sum_{t\in Z(\bC_A)^F\setminus A}\chi_t(s)$,
and the number of elements in $Z(\bC_A)^F\setminus A$ is given by a polynomial
in $q$ of degree strictly smaller than $d(A)$. \par
Explicit computation now shows that for all equivalence classes $A$ of
semisimple elements in $G^*$, the sum of the degrees
$d_u(A)+d_s(A)+d(A)$, respectively $d_u(A)+d_s(A)+d(A)-1$ in the case that
there is some $t\in A$ not in the kernel of $s$, is smaller than~$0$,
whence the claim.
\end{proof}

\begin{rem}
 In fact, using information on subgroups containing regular unipotent
 elements, we will see that $f(q)=0$ for all $q = p^a$ with $p$ good,
 see Theorem \ref{thm:rigidity}.
\end{rem}

\begin{rem}
A quick computation with the generic character table gives that for
$G=\tw3D_4(p^f)$, $p\ge3$, the normalized structure constant of
$(C_1,C_2,C_3)$ with $C_1$ the class of involutions, $C_2$ the class of
unipotent elements of type $3A_1$, and $C_3$ the class of regular unipotent
elements equals~1.
But since all three classes intersect $G_2(p)$ non-trivially, and the
structure constant there equals~1 as well, these triples only generate
$G_2(p)$ respectively $\SL_2(8)\cong{}^2G_2(3)'$ for $p=3$.
\end{rem}

\section{Lie Primitive Subgroups Containing Regular Unipotent Elements}

Let $G$ be a simple algebraic group (of adjoint type) over
an algebraically closed field of characteristic $p \ge 0$.   We want
to consider the closed subgroups of $G$ containing a regular unipotent
element of $G$.   The maximal closed subgroups of positive dimension
containing a regular unipotent element are classified in
\cite[Thm.~A]{SS97}.   Of course, subfield subgroups and parabolic subgroups
contain regular unipotent elements.  Thus, we focus on the
Lie primitive subgroups (those finite groups which
do not contain a subgroup of the form $O^{p'}(G^F)$ where $F$
is some Frobenius endomorphism of $G$ and are not contained in any proper
closed positive dimensional subgroup). Note that if $p=0$, unipotent elements
have infinite order and  so any closed subgroup containing a regular unipotent
element has positive dimension. So we assume that $p > 0$.

We record the following well known lemma.

\begin{lem}   \label{lem:adjoint}
 Let $G$ be a simple algebraic group of rank $r$ over an algebraically closed
 field. Let $W = \Lie(G)$ denote the adjoint module for $G$.
 If $w \in W$, then the stabilizer of $w$ in $G$ has dimension at least $r$.
\end{lem}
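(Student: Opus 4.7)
The plan is to combine upper semi-continuity of stabilizer dimension with the fact that the generic element of $W$ is a regular semisimple element, whose centralizer is a maximal torus of $G$ of dimension $r$.

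First I would fix a maximal torus $T\le G$ with Cartan subalgebra $\mathfrak{t}=\Lie(T)$ of dimension $r$, and let $\Phi$ be the root system of $G$ with respect to $T$. The subset
$$\mathfrak{t}^{\reg}:=\mathfrak{t}\setminus\bigcup_{\alpha\in\Phi}\ker(\alpha)$$
is the complement of finitely many proper linear hyperplanes in $\mathfrak{t}$, hence open and dense in $\mathfrak{t}$. Its $G$-saturation $U:=G\cdot\mathfrak{t}^{\reg}\subseteq W$ under the adjoint action is the locus of regular semisimple elements in $W$, and is a $G$-stable, open and dense subset of $W$. For every $w\in U$ the centralizer $C_G(w)$ --- which equals the stabilizer of $w$ under the adjoint action --- is a $G$-conjugate of $T$, hence $\dim C_G(w)=r$.

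Next I would invoke the standard upper semi-continuity of stabilizer dimension for an algebraic group action: for every integer $n\ge 0$, the set
$$\{w\in W:\dim C_G(w)\ge n\}$$
is closed in $W$. This is a consequence of the upper semi-continuity of fibre dimension applied to the action morphism $G\times W\to W\times W$, $(g,w)\mapsto(g\cdot w,w)$, whose fibre over a point of the image is a coset of the stabilizer, of dimension $\dim C_G(w)$. Taking $n=r$, the resulting closed subset contains the dense set $U$ and therefore coincides with all of $W$, yielding $\dim C_G(w)\ge r$ for every $w\in W$.

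The only delicate point is ensuring that regular semisimple elements exist and are dense in $W$ in every characteristic: this is elementary, since the roots $\alpha\in\Phi$ remain nonzero linear forms on $\mathfrak{t}$, so $\mathfrak{t}^{\reg}$ is a nonempty open subset and a standard dimension count gives $\dim U=\dim G$. The remaining identification $C_G(w)=T$ for $w\in\mathfrak{t}^{\reg}$ is also standard: any $g\in G$ centralizing such a $w$ normalises the unique Cartan subalgebra containing $w$, and regularity then forces $g$ into $T$. An alternative route, which would give the same bound, is through the adjoint quotient $W\to W/\!/G$: Chevalley restriction identifies its image with $\mathfrak{t}/W_G$, of dimension $r$, so that orbits, being contained in fibres of this map, have dimension at most $\dim G-r$.
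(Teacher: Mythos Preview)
Your proof is correct and follows essentially the same strategy as the paper's two-sentence argument: upper semi-continuity of stabilizer dimension reduces the claim to the dense locus of regular semisimple elements, where the stabilizer is a maximal torus of dimension $r$. You have simply supplied the details the paper leaves implicit, and your alternative route via the adjoint quotient and Chevalley restriction is a pleasant bonus, though not needed here.
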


\begin{proof}
Since the condition on dimension is an open condition, it suffices to prove
this for $w$ corresponding to a semisimple regular element in $W$. In this
case, the stabilizer of $w$ in $G$ is a maximal torus which has rank~$r$.
\end{proof}

We only consider exceptional groups here. One could prove a similar result
for the classical groups using \cite{GPPS} and \cite{Di}. The following
well-known result on the orders of regular unipotent elements
in the exceptional groups will be used throughout the subsequent proof.
This can be read off from the tables in \cite{Law95}.

\begin{lem}   \label{lem:orders}
 Let $G$ be an exceptional group of Lie type in characteristic $p > 0$ with
 Coxeter number $h$. Then the order of regular unipotent elements of $G$ is as
 given in Table~\ref{tab:orders}.
 \end{lem}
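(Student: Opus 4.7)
The plan is to derive the orders directly from the Jordan-block data tabulated by Lawther \cite{Law95}. For each exceptional type $G_2, F_4, E_6, E_7, E_8$ and each characteristic $p > 0$, Lawther lists (under Bala--Carter labelling) the partition $\lambda(p)$ describing the Jordan block sizes of every unipotent class on a faithful representation of the algebraic group. First I would identify the regular unipotent class (labelled by the full Dynkin diagram) and extract its largest Jordan block $J(p) := \max_i \lambda_i(p)$. Second, I would apply the standard identity $u^{p^n} - 1 = (u-1)^{p^n}$ in characteristic $p$, valid for unipotent $u$, to conclude that the order of $u$ is $p^n$ where $n = \lceil \log_p J(p) \rceil$. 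Assembling these values across the relevant pairs (type, $p$) yields Table~\ref{tab:orders}.

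It is useful to separate two regimes as a sanity check. When $p$ is large enough relative to the Coxeter number $h$ (in particular, when the principal $\SL_2$ subgroup embeds in $G$), every regular unipotent element lies in such an $\SL_2$, and since any nontrivial unipotent of $\SL_2(\bar\FF_p)$ has order $p$, the Jordan blocks of regular unipotent on any faithful $G$-module are bounded by $p$; hence the order is exactly $p$. For smaller primes, Jordan blocks can exceed $p$ and the order jumps to $p^2$ or occasionally $p^3$; these are the genuinely finite cases enumerated in Lawther. This also explains why the generic characteristic-zero shape $(2e_i+1)$, with $e_i$ the exponents of the Weyl group and largest block $2h-1$, does not directly give the order in characteristic~$p$ --- in characteristic $p$ the characteristic-zero $\SL_2$-irreducibles of dimension $> p$ must split into further pieces, and Lawther's tables record precisely this splitting.

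The main obstacle is purely bookkeeping: matching Lawther's Bala--Carter labelling to identify the regular class, and handling the smallest characteristics (notably $p = 2, 3, 5$ depending on type) where the Jordan structure deviates most sharply from the characteristic-zero shape. Once these conventions are aligned, Table~\ref{tab:orders} is obtained by direct inspection of \cite{Law95}, and the few entries can be cross-checked against the standard order computations recorded in Carter~\cite{Ca}.
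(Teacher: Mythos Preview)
Your proposal is correct and is essentially the same approach as the paper: the paper simply states that the result ``can be read off from the tables in \cite{Law95}'', and your proof is a fleshed-out version of exactly this, making explicit the standard fact that a unipotent element whose largest Jordan block has size $J$ has order $p^{\lceil \log_p J\rceil}$.
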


\begin{table}[htbp]
\caption{Orders of regular unipotent elements}   \label{tab:orders}
\[\begin{array}{|r|ccccc|}
\hline
 G& p=2& p=3& p=5& 5<p<h& h\le p\\
\hline
     G_2& \ \,8& \ 9& \ 25& p^2& p\\
 F_4,E_6&  16&  27& \ 25& p^2& p\\
     E_7&  32&  27& \ 25& p^2& p\\
     E_8&  32&  81&  125& p^2& p\\
\hline
\end{array}\]
\end{table}

We will give all possibilities for maximal
Lie primitive subgroups of simple exceptional groups containing a regular
unipotent element (we are certainly not classifying all cases up
to conjugacy nor are we claiming that all cases actually do occur --- although
one can show that several of the cases do occur).

We deal with $G_2(k)$ first. In this case, all maximal subgroups of the
associated finite groups are known \cite{Coop, PBK}  and so it is a simple
matter to deduce:

\begin{thm}   \label{g2}
 Let $G=G_2(k)$ with $k$ algebraically closed of characteristic $p > 0$.
 Suppose that $M$ is a maximal Lie primitive subgroup of $G$ containing a
 regular unipotent element.  Then one of the following holds:
 \begin{enumerate}[\rm(1)]
  \item  $p=2$ and $M=J_2$;
  \item  $p=7$ and $M= 2^3.\PSL_3(2)$, $G_2(2)$ or $\PSL_2(13)$; or
  \item  $p=11$ and $M= J_1$.
 \end{enumerate}
\end{thm}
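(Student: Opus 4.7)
The plan is to invoke the known classification of maximal subgroups of the finite groups $G_2(q)$ due to Cooperstein \cite{Coop} for $q$ even and Kleidman \cite{PBK} for $q$ odd. First I would observe that a maximal Lie primitive $M \le G=G_2(k)$ is by definition finite, so the matrix entries of its elements generate a finite subring of $k$ and $M$ lies in $G_2(q)$ for some $q = p^f$. Taking $f$ minimal, the Lie primitivity hypothesis forces $M$ to be a maximal subgroup of $G_2(q)$ that is neither a parabolic subgroup, nor the fixed points of a proper positive-dimensional reductive subgroup, nor a subfield subgroup $G_2(q_0)$: the first two options are excluded because they would place $M$ inside a proper positive-dimensional closed subgroup of $G$, and the third is excluded by the requirement that $M$ not contain $O^{p'}(G^F)$.

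Consulting the Cooperstein--Kleidman list, the remaining candidates form a short explicit collection of ``exotic'' finite maximal subgroups, parametrized by $p$ and congruence conditions on $q$. For each I would apply Lemma~\ref{lem:orders}: a regular unipotent element of $G$ has order $8$ if $p=2$, $9$ if $p=3$, $25$ if $p=5$, and $p$ if $p \ge 7$ (the Coxeter number of $G_2$ being $6$). A necessary condition for $M$ to contain a regular unipotent is therefore that $|M|$ is divisible by the required $p$-power, which already discards most entries of the list.

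Running this filter produces exactly the three families in the statement. For the survivors one still has to check that the element of the correct order actually lies in the regular unipotent class of $G$, rather than in a subregular unipotent class or being semisimple in $G$. This can be done using the standard $7$-dimensional (or adjoint $14$-dimensional) representation of $G_2$ and checking that the element has a single Jordan block of the maximal possible size, or equivalently via the known Brauer/character tables of $J_2$, $2^3.\PSL_3(2)$, $G_2(2)$, $\PSL_2(13)$, and $J_1$ composed with the restriction to the embedding in $G$.

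The main obstacle, though entirely mechanical, is this last case-by-case verification: for each exotic entry in the Cooperstein--Kleidman tables and each relevant prime $p$, one must check both the order condition and that the $p$-element is truly regular unipotent in $G$ (not merely unipotent of maximal order inside $M$). The bookkeeping is what is hidden behind the phrase ``a simple matter to deduce''; conceptually everything is determined by the classification of \cite{Coop, PBK} together with Lemma~\ref{lem:orders}.
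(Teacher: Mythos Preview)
Your approach is correct and is exactly the paper's: the paper gives no argument beyond the sentence ``all maximal subgroups of the associated finite groups are known \cite{Coop,PBK} and so it is a simple matter to deduce,'' and you have simply fleshed out that deduction via the order filter from Lemma~\ref{lem:orders}. One small remark: the theorem as stated only asserts a necessary condition, so you do not strictly need the positive verification that each survivor really contains a regular unipotent element; the paper carries out that converse check in the paragraph immediately following the theorem rather than as part of its proof.
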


Note that in the previous theorem, each of the possibilities does contain
a regular unipotent element.  In (1), this follows by observing that since
$G_2(k) < \Sp_6(k)$, any element of order $8$ has a single Jordan block and
so is regular unipotent in $G$.  In all possibilities in (2), $M$ acts
irreducibly on the $7$ dimensional module $V$ for $G$ and has a Sylow
$7$-subgroup of order $7$.  Thus, $V$ is a projective $M$-module, whence an
element of order $7$ has a single Jordan block of size~$7$.  The only
unipotent elements of $G$ having a single Jordan block on $V$ are the regular
unipotent elements \cite{Law95}.
In (3), we note that $M$ contains $\PSL_2(11)$ which acts irreducibly
and so elements of order $11$ have a single Jordan block.

We now consider $G$ of type $F_4$, $E_6$, $E_7$ or $E_8$; here we
let $t(G)$ be defined as in \cite{LS03}.

\begin{thm}   \label{lieprim}
 Let $G$ be a simple algebraic group over an algebraically closed field $k$
 of characteristic $p > 0$. Assume moreover, that $G$ is exceptional of rank
 at least $4$. Suppose that $M$ is a maximal Lie primitive subgroup of $G$
 containing a regular unipotent element.
 \begin{enumerate}[\rm(a)]
  \item  If $G=F_4(k)$ then one of the following holds:
   \begin{enumerate}[\rm(1)]
    \item  $p=2$ and $F^*(M)= \PSL_3(16)$, $\PSU_3(16)$ or $\PSL_2(17)$;
    \item  $p=13$ and $M = 3^{3}:\SL_3(3)$ or $F^*(M)=\PSL_2(25)$,
     $\PSL_2(27)$ or $\tw3D_4(2)$; or
    \item  $M=\PSL_2(p)$ with $13 \le p \le 43$.
   \end{enumerate}
  \item If $G=E_6(k)$ then one of the following holds:
   \begin{enumerate}[\rm(1)]
    \item  $p=2$ and $F^*(M)=\PSL_3(16)$, $\PSU_3(16)$ or $Fi_{22}$;
    \item  $p=13$ and $M = 3^{3+3}:\SL_3(3)$ or $F^*(M)=\tw2F_4(2)'$; or
    \item  $M=\PSL_2(p)$ with $13 \le p \le 43$.
   \end{enumerate}
  \item If $G=E_7(k)$ then one of the following holds:
   \begin{enumerate}[\rm(1)]
    \item  $p=19$ and $F^*(M) = \PSU_3(8)$ or $\PSL_2(37)$; or
    \item  $M=\PSL_2(p)$ with $19  \le p \le 67$.
   \end{enumerate}
  \item If $G=E_8(k)$ then one of the following holds:
   \begin{enumerate}[\rm(1)]
    \item  $p=2$ and $F^*(M) = \PSL_2(31)$;
    \item  $p=7$ and $F^*(M)=\PSp_8(7)$ or $\Om_9(7)$;
    \item  $p=31$ and $M = 2^{5+10}.\SL_5(2)$ or $5^3.\SL_3(5)$, or
       $F^*(M) = \PSL_2(32)$, $\PSL_2(61)$ or $\PSL_3(5)$; or
    \item  $M=\PSL_2(p)$ with $31 \le p  \le 113$.
   \end{enumerate}
 \end{enumerate}
\end{thm}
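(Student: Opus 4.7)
The plan is to combine the Liebeck--Seitz classification of finite Lie primitive subgroups of exceptional algebraic groups \cite{LS03} with the numerical constraints imposed by the order of a regular unipotent element in Lemma~\ref{lem:orders}. By \cite{LS03}, if $M$ is a maximal Lie primitive subgroup of $G$ of rank at least four, then either $F^*(M)$ is elementary abelian (so $M$ is the normalizer in $G$ of a toral subgroup), or $F^*(M)$ is almost simple and belongs to a finite list bounded in terms of the invariant $t(G)$. In the almost simple case, $F^*(M)$ is either of Lie type in characteristic $p$ with a bounded field of definition, or of Lie type in cross characteristic, alternating, or sporadic, of bounded order. Intersecting this finite list with the divisibility requirement that $|M|$ be a multiple of the order $r$ of a regular unipotent element (a power of $p$ depending on whether $p$ is tiny, small, or at least the Coxeter number) leaves only finitely many pairs $(M,p)$ to analyze in each of types $F_4$, $E_6$, $E_7$, $E_8$.

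For each surviving candidate, the next step is to decide whether an element of order $r$ in $M$ is actually a regular unipotent of $G$, as opposed to lying in some smaller unipotent class of the same order. I would carry this out on a low-dimensional rational module for $G$, typically the adjoint module $W = \Lie(G)$ or a minimal module. By \cite{Law95}, the regular unipotent class is characterized by its Jordan block decomposition on $W$ and is often already pinned down by the presence of a single block of maximal size. Reading off the Jordan type of an element of order $r$ in $M$ from its Brauer character (or from an explicit matrix realization of the embedding) distinguishes regular from non-regular unipotents and decides whether $M$ remains on the list.

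The infinite family $M = \PSL_2(p)$ requires separate handling. A faithful embedding $\PSL_2(p) \hookrightarrow G$ in which an element of order $p$ maps to a regular unipotent forces the $\mathfrak{sl}_2$-type structure on $\Lie(G)$ to be defined over $\FF_p$, and, by Lemma~\ref{lem:adjoint}, forbids $\PSL_2(p)$ from being contained in any proper positive-dimensional closed subgroup. The classification of $A_1$-type subgroups of the relevant exceptional algebraic groups, together with a comparison of the Jordan block sizes $2e_i+1$ produced by a principal-type embedding with the modular representation theory of $\PSL_2(p)$, then yields the explicit upper bounds $43$, $43$, $67$, and $113$ stated in the four cases.

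The main obstacle is the case-by-case bookkeeping needed to eliminate near-miss candidates on the Liebeck--Seitz list: many finite almost simple groups contain an element of order $r$ but admit no embedding in $G$ sending that element to the regular unipotent class. For entries such as $\PSp_8(7)$ or $\Om_9(7)$ in $E_8$ at $p=7$, and the toral-normalizer cases $2^{5+10}.\SL_5(2)$ and $5^3.\SL_3(5)$ in $E_8$ at $p=31$, one must exhibit the embedding explicitly (arising from a Jordan subgroup or from an irreducible rational representation) and identify the fusion of its order-$r$ element via Lawther's tables \cite{Law95}. For the sporadic and alternating entries, the known Brauer characters compute the Jordan type on $W$ directly, either confirming regular unipotent fusion or eliminating the candidate.
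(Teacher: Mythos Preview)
Your overall strategy---restrict the Liebeck--Seitz list \cite{LS03} by the order of a regular unipotent from Lemma~\ref{lem:orders}---is exactly what the paper does. But you have misread what the theorem asserts, and this leads you to propose a great deal of unnecessary work. The theorem gives only \emph{necessary} conditions: if $M$ is maximal Lie primitive and contains a regular unipotent, then $M$ appears on the list. The paper explicitly disclaims any attempt to show that each listed $M$ actually occurs or actually contains a regular unipotent element. So your second and fourth paragraphs---computing Jordan types on $W$ via Brauer characters, exhibiting embeddings explicitly, checking fusion against Lawther's tables to confirm or eliminate candidates---are not part of the proof. The paper simply compares the $p$-exponent of each candidate $M$ (and, in the cross-characteristic case, adds the constraint that the centralizer of the relevant element be a $p$-group) against Table~\ref{tab:orders}, and records what survives.

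Your treatment of the $\PSL_2(p)$ family is too vague to yield the stated bounds. The paper's argument is sharper and more structural: first, if the regular unipotent has order $p^b$ with $b>1$, then $M$ would need a field automorphism of order $p^{b-1}$, forcing $p^a$ large enough that \cite{Law12} places $M$ in a positive-dimensional subgroup; so the regular unipotent has order $p$, giving the lower bound on $p$. Then \cite[Thms.~1.1, 1.2]{ST2} force $a=1$, and the upper bound is read off directly from \cite[Thm.~2]{ST1} (equivalently \cite[Thm.~29.11]{MT}), which says that for $p$ above an explicit bound any such $\PSL_2(p)$ lies in a closed $A_1$-subgroup and hence is not Lie primitive. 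Your sketch via ``$\mathfrak{sl}_2$-type structure'' and block sizes $2e_i+1$ gestures at the underlying mechanism but does not by itself produce the numbers $43$, $67$, $113$.

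Two smaller omissions: your dichotomy ``$F^*(M)$ elementary abelian or $F^*(M)$ almost simple'' misses the case $F(M)=1$ with $M$ not almost simple (the paper's Case~2, $(A_5\times A_6).2^2$ in $E_8$, dispatched by exponent), and you do not single out the rank-one groups $\tw2B_2$ and $\tw2G_2$ in characteristic $p$; the latter needs a short fixed-point argument on $\Lie(G)$ via Lemma~\ref{lem:adjoint} and \cite{sin}, not just an exponent count.
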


\begin{proof}
Let $G$ be a simple exceptional algebraic group over $k$ of rank at least $4$.
Let $M$ be a maximal Lie primitive subgroup of $G$  (i.e., $M$ is Lie
primitive, not a subfield group, and is not contained in any finite subgroup
of $G$ other than subfield groups). We split the analysis into various cases.
The possibilities for $M$ are essentially listed in \cite[Thm.~8]{LS03}.
See also \cite{CLSS, LS99}.


\medskip\noindent
{\bf Case 1.} $M$ has a normal elementary abelian $r$-subgroup (with $r\ne p$).

By \cite{CLSS}, this implies that one of the following holds:
\begin{enumerate}
\item  $p \ne 3$,  $G= F_4(k)$ with $M \cong  3^{3}:\SL_3(3)$;
\item  $p \ne 3$,  $G= E_6(k)$ with $M \cong 3^{3+3}:\SL_3(3)$;
\item  $p \ne 2$,  $G= E_8(k)$ with $M \cong 2^{5+10}.\SL_5(2)$; or
\item  $p \ne 5$,  $G= E_8(k)$ with $M \cong 5^3.\SL_3(5)$.
\end{enumerate}
By considering the order of a regular unipotent element,
 we see that the only possibilities
are $p=13$ in~(1) or~(2) and $p=31$ in~(3) or~(4).

\medskip\noindent
{\bf Case 2.} $F(M)=1$ but $M$ is not almost simple.

By \cite{LS03}, the only possibility is that $G=E_8(k)$ and
$M \cong (A_5 \times A_6).2^2$. By considering the exponent of $M$ compared
to the order of a regular unipotent element, we see that $M$ contains no
regular unipotent elements.

\medskip\noindent
{\bf Case 3.}  $F^*(M)$ is a simple group of Lie type in characteristic $p$
of rank~1.

We first deal with the case that $F^*(M)=\PSL_2(p^a)$. Suppose that a regular
unipotent element of $G$ has order $p^b$ with $b > 1$.  Then
$M$ must involve a field automorphism of order $p^{b-1}$, whence
$a \ge p^{b-1}$ and it follows that $p^a > (2,p-1) t(G)$, whence this case does
not occur by \cite{Law12}.

Thus, we may assume that the regular unipotent element has order $p$ which
gives us the lower bound for $p$ in the result.  It follows by
\cite[Thms.~1.1 and 1.2]{ST2} that $a=1$ and $M=\PSL_2(p)$. The upper bound
for $p$ follows by \cite[Thm.~2]{ST1} (see also \cite[Thm.~29.11]{MT}).


If $p=2$ and $F^*(M) =\tw2B_2(2^{2a+1})$, $a \ge 1$, then the exponent of the
Sylow $2$-subgroup of $M$ is $4$ and so $M$ will not contain a regular
unipotent element.

If $p=3$ and $F^*(M)={^2}G_2(3^{2a+1})'$, then the exponent of a Sylow
$3$-subgroup of $F^*(M)$ is $9$. Thus, there must be a field automorphism of
order $3$ in $M$ (or of order~$9$ when $G=E_8(k)$).  It follows that
$3^{2a+1} > 2t(G)$ unless $2a+1 = 3$ and $G=F_4, E_6$ or $E_7$.
Thus, $M= {^2}G_2(27).3$.  Let $V$ be the adjoint module for $G$.  The only
irreducible representations of $M$ in characteristic $3$ of dimension
at most $\dim V$ are the trivial module,  a module of dimension $21$
or if $G=E_7$ a module of dimension $81$.  It follows that  by noting
that $\dim H^1(M,W) \le 1$ for any of the possible modules $W$ occurring
as composition factors of $V$ \cite{sin}.  It follows easily that $M$
has fixed points on $V$, whence by Lemma \ref{lem:adjoint} that $M$
is contained in a positive dimensional subgroup of $G$, so this
case does not occur.

\medskip\noindent
{\bf Case 4.}  $F^*(M)$ is a simple group of Lie type in characteristic $p$,
of  rank $r$ at least $2$.

Generically in this case, $M$ will be contained in a positive dimensional
subgroup. However if the rank and field size are small, there are some
possibilities left open.

It follows by \cite[Thm.~8]{LS03} that $r \le 2s$ where $s$ is the rank of $G$.
Similarly it follows that either $q \le 9$ or $F^*(M)=\PSU_3(16)$ or
$\PSL_3(16)$.

The cases to deal with are therefore:

$F^*(M)=\PSU_3(2^a)$, $1<a\le 4$ or $\PSL_3(2^a)$, $1\le a\le 4$ with $p=2$.
In this case, the exponent of a Sylow $p$-subgroup of $M$ is at most $8$
unless $2^a=16$ and so $M$ contains no regular unipotent elements.
If $2^a=16$, the same argument rules out $E_7(k)$ and $E_8(k)$.

Next suppose that $F^*(M)= \PSL_3(q)$ or $\PSU_3(q)$ with $q = 3, 5, 7$ or~$9$.
The exponent rules out the possibility that $M$ contains a regular unipotent
element.

Next consider the case that $F^*(M) = \PSp_4(q),\PSL_4(q)$ or $\PSU_4(q)$ with
$q = p^a \le 9$.
If $p$ is odd, then the exponent of a Sylow $p$-subgroup of $M$ is either
$p$ or $9$, a contradiction.   If $q$ is even, the exponent of $M$
is at most $8$, also a contradiction.

Next suppose that $F^*(M)=\PSp_6(q)$ or $\Om_7(q)$ with $q = p^a\le9$.
Again, it follows that the exponent of a Sylow $p$-subgroup of $M$ is smaller
than the order of a regular unipotent element of $G$.

The remaining cases are when $M$ has rank $4$ and is defined over a field of
size $q=p^a \le 9$ and so we may assume that $G=E_8(k)$. If $p=2$, then the
exponent of a Sylow $2$-subgroup of $M$ is at most $16$, which is too small by
Table~\ref{tab:orders}.  Similarly, if $p=3$ or $5$, then
the exponent of a Sylow $p$-subgroup of $M$ is at most $p^2$, again too small.
The remaining possibility is that $p  = q =7$, whence $F^*(M)=\PSp_8(7)$ or
$F^*(M)=\Om_9(7)$ by Table~\ref{tab:orders}.

\medskip\noindent
{\bf Case 5.} $F^*(M)$ is a simple group not of Lie type in characteristic $p$.

We can eliminate almost all of these by comparing the order of a regular
unipotent element to the exponent of the possibilities for $M$ given in
\cite[Thm.~8]{LS03}. Moreover, the element of the right order
must have centralizer a $p$-subgroup (since this is true for regular
unipotent elements). The possibilities remaining are given in the theorem.
\end{proof}

\begin{rem}
One can show that some of the subgroups listed in Theorem~\ref{lieprim} are Lie
primitive and do contain regular unipotent elements.
The possibilities with $M \cong \PSL_2(p)$ given above likely do not occur
(indeed this follows by Magaard's thesis \cite{Ma} for $F_4$ and
by unpublished work of Aschbacher \cite{asch} for $E_6$).

Note in particular that if $p > 113$, then there are no Lie primitive
subgroups containing a regular unipotent element (and likely this is true
for $p > 31$).
\end{rem}

Note the following corollary.

\begin{cor}   \label{cor:e8prim}
 Let $G=E_8(k)$ over an algebraically closed field $k$ of characteristic
 $p > 5$. Suppose that $x$ is an involution in $G$,  $y$ is in the
 conjugacy class $4A_1$ and $z$ is a regular unipotent element with $xyz=1$.
 \begin{enumerate}
 \item If $p > 7$, then $\langle x, y \rangle$ is not contained in a Lie
  primitive subgroup.
 \item If $p=7$ and $\langle x, y \rangle$ is contained in a Lie primitive
  subgroup, then $\langle x, y \rangle$ is contained in a proper closed
  subgroup of $G$ of positive dimension.
 \end{enumerate}
\end{cor}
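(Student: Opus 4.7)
The plan is to invoke Theorem~\ref{lieprim}(d), which enumerates the maximal Lie primitive subgroups $M\le G=E_8(k)$ that contain a regular unipotent element, and then to dispose of each possibility. Since $xyz=1$, the element $z=(xy)^{-1}$ lies in $\langle x,y\rangle$, so $\langle x,y\rangle\le M$ forces $M$ to contain the regular unipotent $z$, putting $M$ in one of the cases listed there.

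For part~(1), $p>7$, the surviving entries are the five subgroups at $p=31$ --- namely $2^{5+10}.\SL_5(2)$, $5^3.\SL_3(5)$, $\PSL_2(32)$, $\PSL_2(61)$, and $\PSL_3(5)$ --- together with $M=\PSL_2(p)$ for primes $31\le p\le 113$. In every one of these an immediate order calculation shows that the Sylow $p$-subgroup $P$ of $M$ is cyclic of order $p$: for $\PSL_2(p)$ this is clear, while for the remaining $p=31$ groups the prime $31$ divides $|M|$ to the first power only, with $P$ generated by a Singer-type element (the Singer cycle of $\SL_5(2)$, a non-split torus generator of $\PSL_2(32)$ or $\PSL_2(61)$, or an order-$31$ element of $\SL_3(5)$). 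Now any cyclic unipotent subgroup of order $p$ in $G$ is contained in a one-dimensional connected unipotent subgroup, which is normalised by a maximal torus of $G$ acting non-trivially by scalars; hence all non-trivial elements of $P$ lie in a single $G$-conjugacy class. Since $M$ contains a regular unipotent by hypothesis, this common class is the regular unipotent class, so $M$ contains no element of $G$-class $4A_1$, contradicting $y\in M$. This proves part~(1).

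For part~(2), $p=7$, Theorem~\ref{lieprim}(d)(2) restricts $F^*(M)$ to $\PSp_8(7)$ or $\Om_9(7)$. Neither Sylow $7$-subgroup is cyclic, and each of these groups has several non-trivial unipotent classes, so the argument of part~(1) fails. The plan instead is to use Lawther's fusion tables~\cite{Law95}, together with the decomposition of $\Lie(G)$ as a module for $M$, to pinpoint the unipotent classes of $M$ fusing into the $G$-class $4A_1$. A $4A_1$ element has centraliser in $G$ containing a commuting product of four root $\SL_2$'s, hence a substantial positive-dimensional overgroup, and I would check that any involution $x$ satisfying $xyz=1$ with $z$ regular unipotent is forced to lie together with $y$ in such a positive-dimensional closed subgroup of $G$.

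The principal obstacle is part~(2): part~(1) collapses to a one-line argument once Theorem~\ref{lieprim}(d) and the cyclic-Sylow observation are in hand, whereas $p=7$ requires a genuine case analysis of $\PSp_8(7)$ and $\Om_9(7)$ via their unipotent fusion into $G$ and an explicit identification of the relevant positive-dimensional overgroup. Once the fusion data is fixed and the $G$-centraliser of a $4A_1$ element is understood, producing the overgroup should be routine, but isolating it is the substantive step.
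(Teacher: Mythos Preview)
Your treatment of part~(1) is correct and coincides with the paper's argument: once Theorem~\ref{lieprim}(d) reduces to the listed groups, the key observation is that for $p>7$ every surviving $M$ has cyclic Sylow $p$-subgroup, hence meets a single unipotent $G$-class (necessarily the regular one), and so cannot contain $y\in 4A_1$.

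Part~(2), however, has a genuine gap in strategy, not just in execution. Your plan is to locate a positive-dimensional overgroup by examining the $G$-centraliser of a $4A_1$ element. But $C_G(y)$ being positive-dimensional says nothing about $\langle x,y\rangle$: there is no reason for $x$ (or the regular unipotent $z$) to lie in or near $C_G(y)$, and the fusion tables in \cite{Law95} by themselves do not produce a common overgroup of $x$ and $y$. As stated the argument does not close.

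The paper proceeds quite differently. One assumes for contradiction that $H=\langle x,y\rangle$ lies in $M$ with $F^*(M)\in\{\PSp_8(7),\Om_9(7)\}$ but in \emph{no} proper positive-dimensional closed subgroup of $G$. Then $H$ cannot sit in a parabolic of $M$ (else it normalises a nontrivial unipotent subgroup, hence lies in a parabolic of $G$), and since $H$ is generated by unipotents one gets $H\le F^*(M)$; as parabolics are the only maximal subgroups of $F^*(M)$ containing a regular unipotent, this forces $H=F^*(M)$. Now the decisive input is representation-theoretic: by \cite{Law95} every Jordan block of $y\in 4A_1$ on $W=\Lie(G)$ has size at most $4$, while Suprunenko's theorem \cite{Su} shows that on any irreducible $F^*(M)$-module in characteristic~$7$ other than the trivial or natural one, $y$ (which cannot act quadratically here, and for $\Om_9$ is not a short root element) has a Jordan block of size at least~$5$. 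Hence the only composition factors of $W$ for $H$ are trivial or natural; a short cohomology/semisimplicity argument then forces $H$ to have a nonzero fixed vector on $W$, and Lemma~\ref{lem:adjoint} places $H$ inside a positive-dimensional subgroup after all, the desired contradiction. The Suprunenko bound combined with the Jordan block constraint on $4A_1$ is the idea your plan is missing.
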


\begin{proof}
We use the previous result. Suppose that $H:=\langle x,y \rangle \le M$ with
$M$ a maximal Lie primitive subgroup of $G$. Consider the possibilities for
$M$ in Theorem~\ref{lieprim}(4) with $p > 5$.

If $M = \PSL_2(p)$, then $M$ intersects a unique conjugacy class of
elements of order $p$ in $G$, a contradiction.
Similarly, in  the cases with $p=31$, a Sylow $p$-subgroup of $M$ is cyclic, a
contradiction.

The only cases remaining are with $p=7$ and $F^*(M)=\PSp_8(7)$ or
$F^*(M)= \Om_9(7)$.   Thus (i) holds.    So consider the remaining
case with $p=7$ and assume that $H$ is not contained in a proper closed
positive dimensional subgroup
of $G$.  It follows that $H$ is not contained in a parabolic subgroup of
$M$ either (for then $H$ would normalize a unipotent subgroup and so
be contained in a parabolic subgroup of $G$ as well).

Since $H$ is generated by unipotent elements, it follows that $H \le F^*(M)$.
Since there are no maximal subgroups of $F^*(M)$ other than parabolic
subgroups containing a regular unipotent element, it follows that $H=F^*(M)$.

Note that $y$ cannot act quadratically
on the natural module for $H$ (because then $x$ and $y$ do not
generate an irreducible subgroup). If $H=\Om_9(7)$, then similarly,
we see that $y$ is not a short root element. It follows by the main results of
\cite{Su} that on any irreducible module other than
the natural or the trivial module  for $H$ in characteristic $7$,
$y$ has a Jordan block of size at least $5$.  However,
$y$ has all Jordan blocks of size at most $4$ on the adjoint module $W$
for $E_8$.    It follows that all composition factors are trivial in case
$H = \PSp_8(7)$ (since the natural module is not a module for the
simple group).   In case,  $H=\Om_9(7)$, since
$H^1(H,V)=\mathrm{Ext}_H^1(V,V)$ with $V$ the natural module, it
follows that $W$ is a semisimple $H$-module and $H$ must have
a fixed point on $W$ (since $248$ is not a multiple of $9$).
However, the stabilizer of a point of $W$ has dimension at least $8$
by Lemma~\ref{lem:adjoint} and so $H$ is contained in a positive dimensional
proper closed subgroup, a contradiction.

This completes the proof.
\end{proof}

We have similar results for the other groups.  In particular, we see that:

\begin{cor}   \label{cor:f4prim}
 Let $G=F_4(k)$ over an algebraically closed field $k$ of characteristic $p>3$.
 Suppose that $x$ is an involution in $G$,
 $y$ is a unipotent element in the class $A_1 + \tilde{A_1}$ and $z$ is a
 regular unipotent element.   If  $xyz=1$, then $H = \langle x, y \rangle$
 is not contained in a Lie primitive subgroup of $F_4(k)$.
\end{cor}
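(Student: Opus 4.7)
The plan is to assume, toward a contradiction, that $H = \langle x, y \rangle$ sits inside a maximal Lie primitive subgroup $M$ of $G = F_4(k)$. Since $z = (xy)^{-1} \in H \le M$, the subgroup $M$ contains a regular unipotent element and therefore appears on the list compiled in Theorem~\ref{lieprim}(a). The hypothesis $p > 3$ immediately eliminates part~(1), and no Lie primitive overgroup of a regular unipotent element is listed at all for $p \in \{5,7,11\}$, so for those primes there is nothing to prove. It therefore suffices to handle the options with $p \ge 13$, namely $M = \PSL_2(p)$ for $13 \le p \le 43$ from part~(3), together with $M = 3^{3}{:}\SL_3(3)$ or $F^*(M) \in \{\PSL_2(25),\PSL_2(27),\tw3D_4(2)\}$ when $p = 13$.

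The first substantive step is to observe that both $y$ and $z$ have order exactly $p$: for $y$ this holds because in good characteristic a representative of the class $A_1 + \tilde A_1$ is a product of two commuting root elements (one long and one short), and for $z$ this follows from Table~\ref{tab:orders} since $p \ge 13 > h = 12$. The second, equally mechanical step is to check prime-by-prime from the displayed group orders that in each remaining candidate $M$ the prime $p$ divides $|M|$ exactly once; consequently every Sylow $p$-subgroup $P$ of $M$ is cyclic of order $p$.

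With both facts in hand the argument closes uniformly, along the same lines as the $p = 31$ case of Corollary~\ref{cor:e8prim}. Both $y$ and $z$ are non-trivial $p$-elements of $M$, so both lie in $M$-conjugates of $P$; in particular there exist $m \in M$ and $1 \le k \le p-1$ with $y = m z^k m^{-1}$. Since $\langle z^k \rangle = \langle z \rangle$, we obtain $C_G(z^k) = C_G(\langle z \rangle) = C_G(z)$, so $z^k$ is still regular unipotent, and hence $y$ is $G$-conjugate to a regular unipotent element. This contradicts $y \in A_1 + \tilde A_1$. The only minor obstacle is the bookkeeping of assembling the short case list from Theorem~\ref{lieprim}(a) and confirming the Sylow statement; unlike in Corollary~\ref{cor:e8prim}, no delicate Jordan-block analysis on the adjoint module is needed, because the $F_4$ list contains no positive-rank quasisimple candidates analogous to $\PSp_8(7)$ or $\Om_9(7)$.
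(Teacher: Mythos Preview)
Your proof is correct and follows essentially the same approach as the paper's. The paper's proof is extremely terse—it simply asserts that for each candidate $M$ from Theorem~\ref{lieprim}(a) with $p>3$, ``$M$ only intersects a single unipotent class of $G$''—whereas you spell out the mechanism behind this: the Sylow $p$-subgroup of each such $M$ is cyclic of order $p$, so every nontrivial $p$-element of $M$ is a conjugate of a power of $z$, hence regular unipotent in $G$. Your added detail (checking $p\nmid |{\rm Out}(F^*(M))|$ implicitly, and noting that $z^k$ is regular because $C_G(z^k)=C_G(z)$) makes the paper's one-line assertion rigorous, but the underlying idea is identical.
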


\begin{proof}
By assumption $H \le M$ for some $M$ as given in Theorem~\ref{lieprim}(1).
However, in all cases with $p > 3$, $M$ only intersects a single unipotent
class of $G$.
\end{proof}

\section{Some Nonexistence Results}

\begin{lem}   \label{sl}
 Let $k$ be a field of characteristic $p \ne 2$. Let $G=\SL_n(k)=\SL(V)$.
 Assume that $x \in G$ is an involution, $y \in G$ is a unipotent element
 with quadratic minimal polynomial and $z \in G$ is a regular unipotent
 element. Then $xyz\ne1$.
\end{lem}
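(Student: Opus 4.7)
The plan is to argue by contradiction: assume $xyz = 1$, which since $x^{-1} = x$ rewrites as $z = y^{-1}x$. Write $y = I + N$ with $N^{2} = 0$, so $y^{-1} = I - N$, and set $U = \ker N$, $r = \mathrm{rank}(N)$; the condition $N^{2} = 0$ forces $\mathrm{im}(N) \subseteq \ker N$, hence $r \le n/2$ and $\dim U = n - r \ge n/2$. The strategy is to pin down the structure of $U$ geometrically, then compute the characteristic polynomial of $z$ in an adapted basis and derive a contradiction by evaluating at $t = \pm 1$.

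First I would analyse the subspace $U \cap xU$. It is $y$-invariant (trivially, since $y$ acts as the identity on $U$) and $x$-invariant (as $x^{2} = 1$), hence stable under $z = y^{-1}x$, with $z$ acting there as $x$. Since $z$ is regular unipotent only the eigenvalue $1$ occurs on any $z$-invariant subspace, whereas $x$ is semisimple with eigenvalues $\pm 1$ (using $p \ne 2$), so $x$ must act as the identity on $U \cap xU$. Therefore $U \cap xU \subseteq V^{+} \cap \ker(z - I)$, and the one-dimensionality of $\ker(z - I)$ yields $\dim(U \cap xU) \le 1$. Combined with the general inequality $\dim(U \cap xU) \ge 2\dim U - n = n - 2r$ and with $r \le n/2$, this pins down $r = \lfloor n/2 \rfloor$.

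Next I would choose a basis of $V$ adapted to $U$ and $xU$, consisting of a basis of $U$ (including a generator of $U \cap xU$ when that subspace is non-zero), its $x$-image, and, when $n$ is even with $U \cap xU \ne 0$, an additional vector $f$ spanning a complement of the hyperplane $U + xU$ (chosen so that $xf = \pm f$). Writing $z = x - Nx$ as a block matrix in this basis and expanding the characteristic polynomial along the rows corresponding to the common vector in $U \cap xU$ and to $f$ yields a factorisation
\[
\det(tI - z) = (t - 1)^{\delta_{1}}(t - \epsilon)^{\delta_{2}}\det\bigl(t^{2}I_{k} + tB - I_{k}\bigr),
\]
where $\delta_{1}, \delta_{2} \in \{0,1\}$ record the contributions of $U \cap xU$ and of the complement, $\epsilon = \pm 1$ is the $x$-eigenvalue on the complement, $k = (n - \delta_{1} - \delta_{2})/2$, and $B$ is a $k \times k$ matrix built from the off-diagonal block of $N$.

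The contradiction then follows from $\det(tI - z) = (t - 1)^{n}$. If $\epsilon = -1$ then $(t + 1)$ would have to divide $(t - 1)^{n}$, already forcing $p = 2$. Otherwise $\det(t^{2}I_{k} + tB - I_{k}) = (t - 1)^{2k}$; substituting $t = 1$ gives $\det B = 0$, whereas substituting $t = -1$ gives $\det(-B) = (-2)^{2k}$, so $\det B = (-4)^{k} \ne 0$ in characteristic different from $2$. The main obstacle is the case split on $\dim(U \cap xU) \in \{0, 1\}$ and the degenerate situation $k = 0$, which only arises for $n = 2$ with $U = xU$; there $U$ is an $x$-stable line contained in $V^{+}$, forcing $\dim V^{+} = 1$ and $\det x = -1$, contradicting $x \in \SL(V)$ directly.
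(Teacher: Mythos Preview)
Your argument is correct and takes a genuinely different route from the paper's.

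The paper proceeds by induction on $n$: if $x$ and $y$ share an eigenvector, pass to a smaller space; otherwise $n=2m$ with both fixed spaces $m$-dimensional, and an explicit choice of basis gives
\[
x=\begin{pmatrix} I_m & J \\ 0 & -I_m\end{pmatrix},\qquad
y=\begin{pmatrix} I_m & 0 \\ I_m & I_m\end{pmatrix},
\]
after which one checks directly that $xy-I$ is either invertible or has $-2$ as an eigenvalue, so $xy$ is never unipotent.

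You avoid induction entirely. Your key observation, that $U\cap xU$ is $z$-invariant with $z$ acting there as $x$, immediately forces $\dim(U\cap xU)\le 1$ and hence $r=\lfloor n/2\rfloor$; this replaces the paper's ``common eigenvector'' reduction in one stroke. Your endgame is also different: rather than inspecting eigenvalues of $xy-I$, you factor the characteristic polynomial of $z$ as $(t-1)^{\delta_1}(t-\epsilon)^{\delta_2}\det(t^2I_k+tB-I_k)$ and evaluate at $t=\pm 1$ to obtain the incompatible equalities $\det B=0$ and $\det B=(-4)^k$. This trick is clean and uniform across the parity of $n$, at the cost of the small case split on $\dim(U\cap xU)$ and the separate treatment of $n=2$.

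Two minor remarks. First, the factorisation step is only sketched; it does hold in all three subcases (even $n$ with $U\cap xU=0$; odd $n$; even $n$ with $\dim(U\cap xU)=1$), and in the last case the components of $Nf$ conveniently disappear when expanding along the last row of $tI-z$, but a reader would appreciate seeing at least one case written out. Second, your description of the adapted basis (``a basis of $U$\ldots and its $x$-image'') needs the obvious adjustment that when $u_1$ spans $U\cap xU$ one has $xu_1=u_1$, so only $xu_2,\ldots,xu_m$ are appended; this is implicit in your dimension count but worth stating.
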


\begin{proof}
If $n=2$, the only involution is central and the result is clear. If $n=3$,
we see that $x$ and $y$ have a common eigenvector $v$ with $xv =-v$.
Thus, $xy$ is not unipotent.

So assume that $n \ge 4$.   If $x$ and $y$ have a common eigenvector,
the result follows by induction.  Thus, $n=2m$ and the fixed spaces of
$x$ and $y$ on $V$ each have dimension $m$.   Thus, choosing an appropriate
basis for $V$, we may assume that:
$$
  x = \begin{pmatrix}   I_m  &  J \\   0  & -I_m \\ \end{pmatrix}
  \quad\text{and}\quad
  y  =\begin{pmatrix}   I_m  &  0  \\   I_m  & I_m \\ \end{pmatrix},
$$
where $J$ is in Jordan canonical form.  If $J$ has more than
$1$ block, then $V = V_1 \oplus V_2$ with $V_i$ invariant under
$\langle x, y \rangle$, whence $xy$ is certainly not regular unipotent.
Note that
$$
xy - I_n = \begin{pmatrix}   J  &  J \\  -I_m  & - 2I_m  \\ \end{pmatrix}.
$$
If $J$ is not nilpotent, then we see that $xy - I_n$ is invertible, whence
$xy$ is not unipotent (indeed has no eigenvalue $1$).
If $J$ is nilpotent, we see that $-2$ is an eigenvalue and so
again $xy$ is not unipotent.
\end{proof}

By viewing $\SO_{2m}(k)$ inside $\SL_{2m}(k)$ and starting with $m=2$,
essentially the same proof yields:

\begin{lem}   \label{so}
 Let $G= \SO_{2m}(k)$, $m \ge 2$, with $k$ of characteristic $p \ne 2$. Assume
 that $x \in G$ is an involution, $y \in G$ is a unipotent element with
 quadratic minimal polynomial and $z \in G$ is a regular unipotent element.
 Then $xyz \ne 1$.
\end{lem}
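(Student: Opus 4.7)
The plan is to mirror the proof of Lemma~\ref{sl}, inducting on $m$ with base $m=2$, working inside the ambient $\SL_{2m}(k) = \SL(V)$. Throughout, suppose for contradiction that $xyz = 1$ with $x,y,z$ as in the statement; note that $z = (xy)^{-1}$ is a regular unipotent element of $\SO_{2m}$, so has Jordan partition $(2m-1,1)$ on $V$ with a $1$-dimensional fixed space.

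For the base case $m = 2$: a regular unipotent $z \in \SO_4(k)$ has partition $(3,1)$; the involution $x$ has $(\pm 1)$-eigenspaces both non-degenerate (since $p \ne 2$) of even dimensions summing to $4$; and $y$ has partition $(2^a,1^b)$ with $2a + b = 4$. One checks the finitely many compatible configurations directly (or uses the isogeny with $\SL_2 \times \SL_2$) to rule out $xyz = 1$. For the inductive step ($m \ge 3$), one analyzes two cases.

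Case A (common eigenvector): suppose $x$ and $y$ share an eigenvector $v$. Then $xv = \pm v$, $yv = v$, and since $z$ is unipotent with $zv = \pm v$, we must have $xv = v$ and $v$ lies in the $1$-dimensional fixed space of $z$. If $v$ is isotropic, then $W := v^\perp/\langle v\rangle$ is non-degenerate of dimension $2m-2$, and the induced elements $\bar x, \bar y, \bar z$ lie in $\SO_{2m-2}(k)$; the Jordan block of size $2m-1$ of $z$ drops by two on passing to $W$, yielding partition $(2m-3,1)$, so $\bar z$ is regular unipotent in $\SO_{2m-2}(k)$, and the induction hypothesis gives a contradiction. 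If $v$ is non-isotropic, then $V = \langle v\rangle \perp v^\perp$ is preserved by $\langle x,y,z\rangle$; restricting to the non-degenerate $(2m-1)$-space $v^\perp$ and running a dimension count between the fixed space of $y|_{v^\perp}$ and the $(\pm1)$-eigenspaces of $x|_{v^\perp}$ produces a further common eigenvector $w$ in $v^\perp$, which after replacing $w$ by $w + cv$ for suitable $c \in k$ can be taken isotropic, reducing to the previous subcase.

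Case B (no common eigenvector): the same dimension count as in the proof of Lemma~\ref{sl} forces $\dim V_+^x = \dim V_-^x = m$ (both non-degenerate under the form) and $y$ to have Jordan partition $(2^m)$, i.e.\ $y - I$ has rank $m$. Choose a basis of $V$ adapted to $V_+^x \oplus V_-^x$ and making the bilinear form standard, so that $x = \operatorname{diag}(I_m, -I_m)$ and $y$ takes the block form used in Lemma~\ref{sl}. The form-preservation condition $y^\top J y = J$ constrains $J$ to be compatible with the form on each eigenspace of $x$, but otherwise the computation of $xy - I$ proceeds exactly as before and shows that $xy$ cannot have a Jordan block of size $2m-1$.

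The main obstacle is Case A with a non-isotropic common eigenvector: this has no analog in the $\SL_n$ setting and requires an auxiliary argument inside $v^\perp$ to extract an isotropic common eigenvector before the induction can be invoked. The explicit matrix computation in Case B is essentially routine once one checks that the extra form-preservation constraints do not open up new possibilities for $xy - I$ beyond those already excluded in Lemma~\ref{sl}.
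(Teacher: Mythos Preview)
Your overall strategy matches what the paper intends: it gives no detailed proof, only the remark that the same argument as for Lemma~\ref{sl} works, viewing $\SO_{2m}(k)\subset\SL_{2m}(k)$ and starting the induction at $m=2$. But your Case~A has a real gap in the non-isotropic subcase, and there is a much simpler route through it.

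First a minor slip: a regular unipotent $z\in\SO_{2m}(k)$ has Jordan type $(2m-1,1)$, so its fixed space on $V$ is \emph{two}-dimensional, not one-dimensional. More seriously, your dimension count in $v^\perp$ need not produce a second common eigenvector $w$: if $\ker(x-1)\cap\ker(y-1)=\langle v\rangle$ is exactly one-dimensional (which is not excluded, e.g.\ when both $\ker(x-1)$ and $\ker(y-1)$ have dimension~$m$), then this line meets $v^\perp$ trivially since $v$ is anisotropic. Even when such a $w$ does exist, your isotropic-ification step requires solving $c^2=-(w,w)/(v,v)$ in $k$, which may fail over a non-closed field. The clean fix, and presumably what the paper has in mind by ``viewing $\SO_{2m}(k)$ inside $\SL_{2m}(k)$'', is to bypass this entirely: when $v$ is anisotropic, simply restrict to the non-degenerate $(2m-1)$-space $v^\perp$. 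The two-dimensional fixed space of $z$ is spanned by an isotropic vector $e_1$ from the $(2m-1)$-block and an anisotropic vector $f$ from the $1$-block, with $(e_1,f)=0$; since $v$ anisotropic forces a nonzero $f$-component, one checks $\ker(z-1)\cap v^\perp=\langle e_1\rangle$. Hence $z|_{v^\perp}$ has a single Jordan block of size $2m-1$, i.e.\ is regular unipotent in $\SL_{2m-1}(k)$, and Lemma~\ref{sl} applied to $x|_{v^\perp},\,y|_{v^\perp},\,z|_{v^\perp}$ finishes this subcase directly, without hunting for a second eigenvector.
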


We will also need to deal with one case where the unipotent element has
a Jordan block of size $3$.

\begin{lem}   \label{spin14}
 Let $k$ be a field of characteristic $p \ne 2$.
 Let $G=\Spin_{14}(k)$. Let $V$ be the natural $14$-dimensional module for $G$.
 If $x \in G$ is an involution,  $y \in G$ is unipotent with $\dim C_V(y)\ge8$
 and $z \in G$ is a regular unipotent element, then $xyz \ne 1$.
\end{lem}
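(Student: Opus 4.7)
The plan is to descend from $G=\Spin_{14}(k)$ to its quotient $\bar G=\SO_{14}(k)$ via the central isogeny $\pi$ with kernel $\{\pm 1\}$, since $G$ acts on $V$ through this quotient. Writing $\bar x,\bar y,\bar z$ for the images, the relation $\bar x\bar y\bar z=1$ persists, $C_V(y)=C_V(\bar y)$, and $\bar z$ is regular unipotent in $\SO_{14}$ and so has Jordan type $[13,1]$ on $V$ with $\dim\ker(\bar z-I)=2$. I then split into cases depending on whether $\bar x$ is trivial.

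If $\bar x=1$ then $x\in\ker\pi=\{\pm 1\}$, and since $x$ is an involution in characteristic $\ne 2$, $x=-1$. The equation then reads $\bar y\bar z=1$, so $\bar y$ and $\bar z^{-1}$ have the same Jordan type $[13,1]$, forcing $\dim C_V(y)=2$, which contradicts the hypothesis $\dim C_V(y)\ge 8$.

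Otherwise $\bar x$ is a nontrivial involution, and I decompose $V=V_+\oplus V_-$ into its $\pm 1$-eigenspaces; set $a:=\dim V_+$. The goal is to obtain two contradictory lower bounds on $a$. First, $C_V(y)\cap V_-=0$: for $u$ in this intersection one computes $\bar z u=\bar y^{-1}\bar x u=-u$, so $u$ would be a $(-1)$-eigenvector of the unipotent element $\bar z$, impossible in characteristic $\ne 2$. Hence projection along $V_-$ embeds $C_V(y)$ into $V_+$, giving $a\ge\dim C_V(y)\ge 8$. Second, I will construct an injection $V_+\hookrightarrow\ker(\bar z-I)$: for $v_+\in V_+$ set
$$v_-:=-(\bar y+I)^{-1}(\bar y-I)v_+\in V_-$$
(using that $\bar y+I$ is invertible, as $\bar y$ is unipotent and $p\ne 2$) and $v:=v_++v_-$; then $(\bar y-I)v_++(\bar y+I)v_-=0$ by construction, which rearranges to $\bar y v=v_+-v_-=\bar x v$, hence $\bar z v=v$. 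Since $v\mapsto v_+$ recovers $v_+$ from $v$, this map is injective, so $\dim\ker(\bar z-I)\ge a\ge 8$, contradicting $\dim\ker(\bar z-I)=2$. The only real computation is the explicit construction of $v_-$; the rest is a soft bookkeeping argument exploiting the tension between $\bar z$ having extremely few Jordan blocks and $\bar y$ having extremely many.
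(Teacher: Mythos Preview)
Your first case ($\bar x = 1$) and the first half of your second case (the proof that $a \ge 8$ via $C_V(y)\cap V_-=0$) are correct. The gap is in the construction of the injection $V_+\hookrightarrow\ker(\bar z-I)$: your assertion that
\[
v_-:=-(\bar y+I)^{-1}(\bar y-I)v_+
\]
lies in $V_-$ is unsupported. That formula involves only $\bar y$, which has no a priori relationship with the eigenspace decomposition of $\bar x$; there is no reason the Cayley-type transform $(\bar y+I)^{-1}(\bar y-I)$ should carry $V_+$ into $V_-$. Once $v_-\notin V_-$, the chain $\bar y v = v_+-v_- = \bar x v$ breaks at the second equality (which requires $\bar x v_- = -v_-$), and your injectivity argument (recovering $v_+$ by projecting $v$ to $V_+$ along $V_-$) collapses for the same reason. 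So the claimed injection is not established.

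Moreover, your bound $a \ge 8$ alone is not enough to finish: one must still rule out $a=8$, and this is precisely where the hypothesis $G=\Spin_{14}(k)$ (rather than $\SO_{14}(k)$) matters. A nontrivial involution $\bar x\in\SO_{14}(k)$ lifts to an involution in $\Spin_{14}(k)$ only when $\dim V_-\equiv 0\pmod 4$, so in fact $\dim V_-\in\{4,8,12\}$ and $a\in\{2,6,10\}$. Combined with your inequality $a\ge 8$ this forces $a=10$. Now the trivial inclusion $V_+\cap C_V(y)\subseteq C_V(z)$ (since $\bar x v=v$ and $\bar y v=v$ imply $\bar z^{-1}v=\bar x\bar y v=v$) gives
\[
\dim C_V(z)\ \ge\ \dim V_+ + \dim C_V(y) - 14 \ \ge\ 10+8-14 = 4 > 2,
\]
a contradiction. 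This is the paper's argument; your construction attempts to bypass the $\Spin$ constraint, but that constraint is genuinely needed to close the case $\dim V_-=6$.
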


\begin{proof}
Since $x$ is an involution in $G$, the $-1$ eigenspace
of $x$ on $V$ either has dimension at least $8$ or has dimension
at most $4$.   If this  dimension is at least $8$, then $x$ and $y$
have a common eigenvector $v$ with $xv = - v$, whence $xy$
is not unipotent.  If this dimension is at most $4$, then
$2 = \dim C_V(z)  \ge \dim C_V(x) \cap C_V(y) \ge 4$, a contradiction.
\end{proof}

\section{Rigidity for $E_8$}   \label{sec:E8}

Let $p$ be a prime with $p \ge 7$. Let $G=E_8(k)$ with $k$ the algebraic
closure of the prime field $\FF_p$.  Let $C_1$ be the conjugacy class of
involutions with centralizer $D_8(k)$, $C_2$ the unipotent conjugacy class
$4A_1$ and $C_3$ the class of regular unipotent elements  in $G$.
Observe that since $p > 5$,  the centralizers of elements in these classes
are connected and so $C_i \cap E_8(q)$ is a single conjugacy class.

\begin{thm}   \label{e8gen}
 Let $G=E_8(k)$ with $k$ the algebraic closure of $\FF_p$ with  $p > 5$.
 If $(x,y,z)\in(C_1,C_2,C_3)$ with $xyz=1$, then $H:=\langle x, y \rangle
 \cong E_8(q)$ with $q=p^a$ for some $a$.
\end{thm}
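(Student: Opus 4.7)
Set $H := \langle x, y \rangle$; since $z = (xy)^{-1} \in H$, the subgroup $H$ contains a regular unipotent element of $G$. The plan is to rule out, in turn, each possible type of proper overgroup of $H$ in $G$, and then to identify $H$ by a minimality argument inside a subfield subgroup.

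First I would eliminate all proper closed positive-dimensional overgroups via the Saxl--Seitz classification \cite[Theorem~A]{SS97}. In good characteristic this reduces the possibilities for a maximal such overgroup to a parabolic subgroup or one of the three reductive maximal subgroups $D_8(k)$, $A_1(k) E_7(k)$, and $G_2(k) F_4(k)$. The parabolic case is immediate because the regular unipotent class of $E_8$ is distinguished and so does not meet any proper Levi factor; a centralizer dimension count for the images of $x$ and $y$ in the Levi then yields a contradiction. For $D_8(k) = C_G(x)$: by the fusion tables of \cite{Law95}, the $E_8$-classes $C_2$ and $C_3$ meet $D_8(k)$ in the orthogonal unipotent classes of partitions $(2^4, 1^8)$ and $(15, 1)$ respectively on the natural $16$-dimensional module; in particular $y$ has quadratic minimal polynomial there, so Lemma~\ref{so} yields $xyz \ne 1$. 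The possibility that $H$ is contained in $N_G(D_8)$ but not in $D_8(k)$ itself is excluded by a direct inspection of the nontrivial coset together with the fact that $x$ is central in $C_G(x) = D_8(k)$. For $A_1 E_7$ and $G_2 F_4$, an analogous fusion computation via \cite{Law95} places $y$ in a class with quadratic minimal polynomial, or with Jordan blocks of the type controlled by Lemma~\ref{spin14}, on the natural module of the relevant classical factor, and one of Lemmas~\ref{sl}, \ref{so}, or~\ref{spin14} applies.

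Lie primitive overgroups are handled by Corollary~\ref{cor:e8prim}: for $p > 7$ there are none, and any such overgroup for $p = 7$ would force $H$ into a proper closed positive-dimensional subgroup, already excluded. Consequently, every proper closed overgroup of $H$ in $G$ is a subfield subgroup, so $H \le E_8(q)$ for some $q = p^a$; choose $a$ minimal. Then $H$ is contained in no proper closed positive-dimensional subgroup of $E_8$ over the algebraic closure of $\FF_q$ (by the same argument applied over the smaller field), in no proper subfield subgroup of $E_8(q)$ (by minimality of $a$), and in no Lie primitive subgroup of $E_8(q)$ (again by Corollary~\ref{cor:e8prim}). The classification of maximal subgroups of $E_8(q)$, cf.~\cite{LS03}, then forces $H = E_8(q)$. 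I expect the main obstacle to be the fusion analysis inside the three reductive maximal subgroups $D_8$, $A_1 E_7$, and $G_2 F_4$: each requires precise Lawther fusion data for both $C_1$ and $C_2$, and the identification of a module on which a nonexistence lemma from Section~4 is directly applicable.
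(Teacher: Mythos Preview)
Your proposal has two substantial problems, one of which is fatal.

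First, you have misremembered the Saxl--Seitz list. For $E_8$ in good characteristic, Theorem~A of \cite{SS97} gives exactly \emph{one} reductive maximal positive-dimensional subgroup containing a regular unipotent element of $G$, namely a principal $A_1(k)$. The maximal-rank subgroups $D_8(k)$, $A_1(k)E_7(k)$, $G_2(k)F_4(k)$ do \emph{not} meet the regular unipotent class of $E_8$ at all; in particular your asserted fusion $C_3\cap D_8=(15,1)$ is wrong (the $D_8$-regular class fuses into a strictly smaller $E_8$-class). So the work you plan for those three subgroups is aimed at cases that never arise, while the one reductive case that does arise, $A_1(k)$, is missing from your list. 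That case, however, is trivial: $A_1(k)$ has a single nontrivial unipotent class and hence cannot meet both $C_2$ and $C_3$.

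Second, and this is the real gap, the parabolic case is nowhere near ``immediate''. That $z$ is distinguished tells you only that $z$ lies in no proper Levi subgroup; it certainly lies in parabolic subgroups (it lies in a Borel). Writing $P=QL$ and projecting, one gets a triple $(x_2,y_2,z_2)$ in $L$ with product~$1$ and $z_2$ regular unipotent in $L$, and now one has to analyse this Levi by Levi. This is where essentially all of the work in the paper's proof sits: Lawther's tables \cite{Law95} are used to show that the projection of $y$ into any type-$A$ Levi factor is quadratic, so Lemma~\ref{sl} disposes of those; the $D_7$ Levi is handled by Lemma~\ref{spin14}; and the $E_7$ and $A_1E_6$ parabolics each require a Scott's Lemma argument on the adjoint module of the Levi to force the image of $H$ into a yet smaller positive-dimensional subgroup, followed by a further appeal to \cite{SS97} inside that Levi and a count of how many distinct maximal parabolics can contain the triple. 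Your one-line ``centralizer dimension count for the images of $x$ and $y$ in the Levi'' does none of this and cannot be made to work as stated: the classes of $x_2,y_2$ in $L$ are not so small that a crude dimension inequality yields a contradiction.

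Your endgame (Lie primitive via Corollary~\ref{cor:e8prim}, then descent to a minimal subfield group) is fine and matches the paper.
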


\begin{proof}
Assume that $H$ does not contain a conjugate of $E_8(p)$.
By Corollary~\ref{cor:e8prim}, it follows that $H$ is contained in a maximal
closed subgroup of $G$ of positive dimension.
By \cite[Thm.~A]{SS97}, the only reductive such subgroup
would be isomorphic to $A_1(k)$.  Since $A_1(k)$ has a unique conjugacy
class of unipotent elements, it cannot intersect both $y^G$ and $z^G$.

The remaining possibility is that $H \le P$ where $P$ is a maximal parabolic
subgroup.  Write $P=QL$ where $L$ is a Levi subgroup of $P$ and $Q$ is
the unipotent radical. Set $S=[L,L]$.  Since $y$ and $z$ are unipotent,
$H \le [P,P]=QS$.

Write $x=x_1x_2$, $y = y_1y_2$ and $z=z_1z_2$ where $x_1, y_1, z_1 \in Q$
and $x_2,y_2,z_2 \in L$. Note that $z_2$ is a regular unipotent element
in $L$.

It follows by \cite{Law95} that if $S_1$ is a direct factor of $S$
of type $A$, then the projection of $y_2$ in $S_1$ is a quadratic unipotent
element (because of the Jordan block structure on
the adjoint module).   Applying Lemma \ref{sl} gives a contradiction if
$S_1 \cong A_j(k)$ with $j \ge 2$.

Thus, $S \cong E_7(k), \Spin_{14}(k)$ or $A_1(k)E_6(k)$. If $S=\Spin_{14}(k)$,
it follows by \cite{Law95} that $y_2$ is either a quadratic unipotent
element or has one Jordan block of size $3$ and all other Jordan blocks of
size at most $2$.  Now Lemma \ref{spin14} gives a contradiction.

So we see that either $S \cong E_7(k)$ or $A_1(k)E_6(k)$.
Suppose that $S = A_1(k)E_6(k)$. It then follows that $x_1$ must be trivial
in $A_1(k)$ and so in that case $H$ is contained in a (non-maximal) parabolic
subgroup $P_1=Q_1E_6(k)$ with unipotent radical $Q_1$ and semisimple part
$E_6(k)$. Let $H_0$ be the projection of $H$ in $E_6(k)$. By
\cite{Law95}, it follows
that $y_2$ will be in one of the classes $3A_1, 2A_1, A_1$ or $1$.
Let $J=E_6(k) \le S$. Let $V$ be the Lie algebra of $J$.
Then $\dim [x_2, V] \le 40$ and $\dim [y_2, V] \le  40$. It follows
that $\dim [x_2, V] + \dim [y_2, V] + \dim [z_2,V] \le 152$.
By Scott's Lemma \cite{scott}, $H_0$ has a fixed point on $V$ (since $V$
is a self dual module). Thus $H_0$ is contained in a positive dimensional
maximal closed subgroup $M$ of $J$ by Lemma~\ref{lem:adjoint}.
By \cite{SS97}, this implies that $M$ is either parabolic
or $M \cong F_4(k)$.  However, $F_4(k)$ has no fixed points on $V$,
so this case cannot occur.  So  $H_0$ is contained in a proper parabolic
subgroup of $QE_6(k)$, whence $H$ is contained in at least $3$ distinct
maximal parabolic subgroups.  However, this contradicts the fact that there
are at most $2$ maximal parabolic subgroups containing our triple
(i.e., the $E_7(k)$ parabolic or the $A_1(k)E_6(k)$ parabolic).

It follows that $H$ is contained only in an $E_7(k)$ parabolic.
Since $E_7(k)$ in $E_8(k)$ is simply connected, it follows
that $x_2$ has centralizer $D_6(k)A_1(k)$.  Arguing as above,
we see that  $y_2$ is in the closure
of $4A_1$ (in $E_7(k)$).   Let $W$ denote the Lie algebra of $E_7(k)$.
It follows that $\dim [x_2, W] + \dim [y_2,W] + \dim [z_2,W] <  2 \dim W$,
whence $H$  has a fixed point acting on $W$ and so $QH$ is contained is a
positive dimensional subgroup of $P$. By \cite{SS97}, either $H$ is contained
in a proper parabolic subgroup of $P$ or $H$ is contained
in $X:=A_1(k) \wr \PSL_2(7)$ with $p=7$. In the first case, $H$ would be
contained in another maximal parabolic subgroup (not of type $E_7$), a
contradiction.  In the latter case, we note that a regular unipotent
element of $G$ has order $49$ and in particular is not contained in $F^*(X)$.
Note that $y$ has order $7$ and all Jordan blocks of $y$ on the Lie algebra
of $E_8$ have size at most $4$ \cite{Law95}.  However, any unipotent element
of $X$ outside $F^*(X)$ has a Jordan block of size $7$ on any module
where $F^*(X)$ acts nontrivially. This contradiction completes the proof.
\end{proof}

\begin{thm}   \label{rigid1}
 The subvariety $X=\{(x,y,z)\in C_1 \times C_2 \times C_3\mid xyz=1\}$ is
 a regular $G$-orbit and if $(x,y,z) \in X$, then $\langle x, y \rangle$ is
 a conjugate of $E_8(p)$. In particular, $(C_i \cap E_8(p)\mid 1 \le i \le 3)$
 is a rationally rigid triple.
\end{thm}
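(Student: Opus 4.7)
The plan is to combine Theorem~\ref{e8gen} with Theorem~\ref{thm:triples} via a Lang--Steinberg counting argument, converting the Frobenius class-sum identity into a count of $G$-orbits. By Theorem~\ref{e8gen}, for any $(x,y,z)\in X$ the subgroup $\langle x,y\rangle\cong E_8(p^a)$ is Zariski dense in $G=E_8(k)$, so since $E_8$ has trivial center, $C_G(x,y,z)=1$. Thus $G$ acts freely on $X$, every $G$-orbit is a torsor of dimension $\dim G$, and by Lang--Steinberg, for each $q=p^a$ with $p$ good,
$$|X(\FF_q)|=N(q)\cdot|E_8(q)|,$$
where $N(q)$ denotes the number of $F^a$-stable $G$-orbits on $X$.

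On the other hand, the Frobenius class-sum formula applied inside $E_8(q)$ together with Theorem~\ref{thm:triples} gives
$$N(q)=\frac{|C_1\cap E_8(q)|\,|C_2\cap E_8(q)|\,|C_3\cap E_8(q)|}{|E_8(q)|^{2}}\bigl(1+f(q)\bigr),$$
a rational function of $q$ in each residue class modulo $N_\bG$. A direct computation shows that the prefactor is identically $1$, i.e.\ $\prod_i|C_{E_8(q)}(x_i)|=|E_8(q)|$ as polynomials in $q$ (the dimensions of the three centralizers sum to $\dim\bG$ and the cyclotomic-polynomial factors match, precisely because the triple in Table~\ref{tab:triples} was chosen rigid). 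Hence $N(q)=1+f(q)$; combined with $|f(q)|<1$ from Theorem~\ref{thm:triples} and the integrality of $N(q)$, this gives $N(q)=1$ for all sufficiently large $q$, and then $N(q)\equiv 1$ (equivalently $f(q)\equiv 0$) by the rational-function identity---which is the content of the remark after Theorem~\ref{thm:triples}.

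Since $N(p^a)=1$ for arbitrarily large $a$, Frobenius acts on the finite set of $G$-orbits of $X$ fixing exactly one orbit at each power; a short combinatorial argument on Galois orbit lengths then forces $X$ to consist of a single $G$-orbit, automatically defined over $\FF_p$. Picking any $(x,y,z)\in X(\FF_p)$, Theorem~\ref{e8gen} forces $\langle x,y\rangle\cong E_8(p^a)\subseteq E_8(p)$, so $a=1$; by $G$-conjugacy this holds for every triple in $X$, so $\langle x,y\rangle$ is a conjugate of $E_8(p)$ throughout. Combined with the rationality of the three classes over $\FF_p$ noted in the introduction, this yields the rational rigidity of the triple $(C_i\cap E_8(p))$. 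The main obstacle is the identity $\prod_i|C_{E_8(q)}(x_i)|=|E_8(q)|$: without it the bound $|f(q)|<1$ controls $N(q)$ only up to a potentially non-unit leading coefficient and cannot by itself force $N(q)=1$; pinning this coefficient down requires writing out each centralizer order as a product of cyclotomic factors and matching it against the order formula for $E_8(q)$.
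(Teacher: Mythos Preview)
Your overall strategy is sound, but the claim you single out as ``the main obstacle''---that $\prod_i|C_{E_8(q)}(x_i)|=|E_8(q)|$ identically as polynomials in $q$---is simply false, and no amount of matching cyclotomic factors will make it true. The centralizer of the $4A_1$ element has reductive part of type $C_4$ (it is not a distinguished class), so $\prod_i|C_{E_8(q)}(x_i)|$ is divisible by $(q-1)^{8+4+0}=(q-1)^{12}$, whereas $|E_8(q)|$ is divisible only by $(q-1)^{8}$. The prefactor $|E_8(q)|/\prod_i|C_{E_8(q)}(x_i)|$ is therefore a genuine nonconstant rational function (with a pole at $q=1$), not the constant $1$.

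Fortunately you do not need this identity. All that is required is that the prefactor tends to $1$ as $q\to\infty$: it has degree $0$ because $\sum_i\dim C_\bG(x_i)=\dim\bG$, and leading coefficient $1$ because $|H(\FF_q)|$ is monic of degree $\dim H$ for every connected $H$ occurring. Combined with $|f(q)|<1$ for large $q$ from Theorem~\ref{thm:triples}, this gives $N(q)\to 1$; since $N(q)$ is a nonnegative integer it equals $1$ for all large $q$, and since it is a rational function on each residue class it is identically $1$. Your Frobenius cycle-length argument then finishes as written (taking $a$ divisible by all cycle lengths).

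The paper reaches the same endpoint by a more geometric route: from the same point estimate it reads off, via Lang--Weil, that $\dim X=\dim G$ with a unique top-dimensional component; since every component of $X$ has dimension at least $\dim G$ by intersection theory (this is made explicit in the $F_4$ section), $X$ is irreducible. Then the free $G$-action forces $X$ to be a single orbit directly, without any separate analysis of how Frobenius permutes the orbits. Your arithmetic version is a legitimate alternative, but note that it quietly relies on the same dimension bound to ensure there are only finitely many $G$-orbits in the first place.
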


\begin{proof}
By Theorem~\ref{thm:triples}, we know that $X$ is an irreducible variety
of dimension equal to $\dim G$.  By Theorem \ref{e8gen}, the centralizer of
any triple in $X$ is trivial, whence $X$ is a single $G$-orbit.
Note that $X$ is defined over $\FF_p$.
So by Lang's theorem, the $\FF_p$-points of $X$ form a single $E_8(p)$-orbit.
Applying Theorem~\ref{e8gen} once again, we see that any triple generates
a subgroup isomorphic to $E_8(p)$.
\end{proof}

An application of the rigidity criterion (see e.g. \cite[Thm.~I.4.8]{MM}) now
completes the proof of Theorem~\ref{thm:main}.

\section{Characteristic Zero}  \label{sec:char 0}

\subsection{Fields}  \label{ss:fields}

Let $G$ be a simple algebraic group of type $G_2$ or $E_8$ over an
algebraically closed field $k$ of characteristic $0$ with the conjugacy
classes $C_i$ defined as in Table~\ref{tab:triples}.

Note that we have the following result in characteristic $0$ (by essentially
the same proof as for Theorem~\ref{rigid1} or) by noting that the number of
orbits in characteristic $0$
is the same as in characteristic $p$ for sufficiently large $p$ (and it is
independent of the algebraically closed field).

\begin{thm}   \label{char0}
 Let $k$ be an algebraically closed field of characteristic~$0$.
 Let $X$ be the subvariety of $C_1 \times C_2 \times C_3$ consisting of those
 triples with product $1$. Then $X$ is a regular $G$-orbit and if
 $(x,y,z) \in X$, then $\langle x, y \rangle$ is a Zariski dense subgroup of
 $G(k)$.
\end{thm}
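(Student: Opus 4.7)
The plan is to deduce both assertions from Theorem~\ref{rigid1} (and the analogous rigidity statement for $G_2$, established by essentially the same argument) via a standard spreading-out argument, and then to verify Zariski density by an analysis parallel to the proof of Theorem~\ref{e8gen}, exploiting the fact that any nontrivial unipotent element has infinite order in characteristic~$0$.

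For the orbit-structure part, I would first observe that the conjugacy classes $C_1, C_2, C_3$ and the incidence variety $X \subseteq C_1 \times C_2 \times C_3$, together with the diagonal $G$-action, are all defined over the ring $R = \ZZ[1/m]$, with $m$ the product of the bad primes for $G$, giving a flat family of schemes over $\operatorname{Spec} R$. Theorem~\ref{rigid1} and its $G_2$-analog assert that for every good closed point $p \in \operatorname{Spec} R$ the geometric fiber $X_{\overline{\FF_p}}$ is a single free $G$-orbit, hence irreducible and smooth of dimension $\dim G$ with trivial stabilizers. Upper semicontinuity of fiber dimension and of the number of geometric irreducible components, together with constructibility of the locus of trivial stabilizers, then forces the generic fiber $X_k$ to be irreducible of dimension $\dim G$ with a free $G$-action, and hence a single regular $G$-orbit. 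Alternatively one could redo the structure-constant computation of Theorem~\ref{thm:triples} directly over $k$, since the inputs from the Lusztig parametrization are uniform in the characteristic, provided it is $0$ or good.

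For Zariski density, let $(x,y,z) \in X$ and let $H$ denote the Zariski closure of $\langle x,y \rangle$ in $G(k)$. Since $z$ is a nontrivial unipotent element in characteristic~$0$, the cyclic group it generates is Zariski dense in a one-dimensional unipotent subgroup, so $H$ has positive dimension and contains a regular unipotent element of $G$. If $H \neq G$, then $\langle x,y \rangle$ is contained in a maximal proper closed positive-dimensional subgroup $M$ of $G$, which by Saxl--Seitz \cite[Thm.~A]{SS97} is either a parabolic subgroup of $G$ or (for $E_8$) a principal $A_1$; the $G_2$ list is analogous. The principal-$A_1$ case is excluded because such a subgroup meets only one nontrivial unipotent class of $G$, while $y$ and $z$ lie in distinct $G$-classes. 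The parabolic case is excluded exactly as in the proof of Theorem~\ref{e8gen}: writing $M = QL$, one projects $\langle x,y \rangle$ onto the semisimple quotient of $L$, bounds the Jordan types of the image of $y$ via Lawther's tables, and applies Lemmas~\ref{sl}, \ref{so}, and \ref{spin14} to the classical-type factors of $L$, together with Scott's lemma and Lemma~\ref{lem:adjoint} on the adjoint representation of any $E_6$- or $E_7$-factor. The small-characteristic exotic subgroups that intervene in the $p=7$ analysis of Theorem~\ref{e8gen} (such as $A_1 \wr \PSL_2(7)$ inside the $E_7$-parabolic) simply do not arise here, so the characteristic-$0$ argument is actually shorter.

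The main obstacle I anticipate is bookkeeping rather than substance: one must confirm that the Saxl--Seitz list, specialized to characteristic~$0$, and the $G_2$-analog of Theorem~\ref{rigid1} are available in the precise form needed to run the case analysis, and that the proof of Theorem~\ref{e8gen} transports verbatim. Granted these inputs, the orbit-structure step is formal flat-family semicontinuity and the density step is a direct transcription.
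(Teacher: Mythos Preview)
Your orbit-structure argument via spreading out and semicontinuity is essentially what the paper does (the paper phrases it as ``the number of orbits in characteristic~$0$ is the same as in characteristic~$p$ for sufficiently large~$p$''), so that part matches.

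For Zariski density, however, the paper takes a shorter and genuinely different route. Rather than rerunning the Saxl--Seitz/parabolic case analysis of Theorem~\ref{e8gen} in characteristic~$0$ as you propose, the paper argues as follows: the triple lies in $G(R)$ for some finitely generated $\ZZ$-subalgebra $R\subset k$; choose a maximal ideal $M$ of $R$ with $R/M$ a finite field of large characteristic such that the reductions of $x,y,z$ stay in the correct classes; by Theorem~\ref{rigid1} the image of $\langle x,y\rangle$ in $G(R/M)$ is $G(\FF_p)$, which acts irreducibly on $\Lie(G)$ mod~$p$; hence $\langle x,y\rangle$ already acts irreducibly on $\Lie(G)$ over~$k$. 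If the Zariski closure $\bar H$ were proper, then $\Lie(\bar H)$ would be a proper nonzero $\langle x,y\rangle$-submodule of $\Lie(G)$, a contradiction. Your approach is correct and self-contained, but it requires re-verifying that all the ingredients (Saxl--Seitz, Lawther's tables, Lemmas~\ref{sl}, \ref{so}, \ref{spin14}, the Scott's-lemma bounds) transport to characteristic~$0$ in the needed form; the paper's reduction-mod-$p$ trick bypasses this entirely by leveraging the finite-characteristic result already in hand.
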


\begin{proof}
As noted, $G(k)$ has an orbit on $X$ with trivial point stabilizers. Note that
the closure of $\langle x, y \rangle$ is positive dimensional (since it
contains unipotent elements).

We claim that $H:=\langle x, y \rangle$ acts irreducibly on $V:=\Lie(G)$.
Note that $H \le G(R)$ where $R$ is some finitely generated subring of
$k$.  There is some maximal ideal $M$ of $R$ such that $x,y$ and $z$
are still in the corresponding conjugacy classes in $G(R/M)$.
By the Nullstellensatz,  $R/M$ is a finite field which we can take be of
large characteristic. By the results for finite characteristic, we know that
the image of $H$ in $G(R/M)$ is $G(\FF_p)$ and in particular acts irreducibly
on the adjoint module in characteristic $p$, whence it acts irreducibly
on $V$.  Thus, the Zariski closure of $H$ is $G$ (otherwise,
$\Lie\bar{H}$ would be a proper invariant $H$-submodule).
\end{proof}

Now fix $z \in C_3$ in $G(\QQ)$  (for example take $z = \prod u_i(1)$ where
the product is over a set of elements from root subgroups for the simple roots).
Let $D=C_G(z)$.  Note that $D$ is a connected abelian unipotent
group of dimension $r$, the rank of $G$.

Let $Y$ be the subvariety of $X$ with the third coordinate equal to $z$.
Note that $Y$ is a regular $D$-orbit (because $X$ is a regular
$G$-orbit).  Thus, $Y$ defines a $D$-torsor.  Since connected unipotent
groups have no nontrivial torsors (basically by a version of Hilbert's
Theorem 90), it follows that $Y(\QQ)$ is nonempty.   Thus, if $(x,y,z) \in X$
we see that $\langle x, y \rangle$ is conjugate to a subgroup of $G(\QQ)$.

In particular, it follows that $X(L)$ is nonempty for any field $L$ of
characteristic $0$, whence it follows trivially that $X(L)$ is a regular
$G(L)$-orbit.

\subsection{The $p$-adic case}

Here we give elementary proofs for some more general results for the
$p$-adic case.

Fix a prime $p$ and set $\ZZ_p$ to be the ring of $p$-adic integers with
field of fractions $\QQ_p$.  Let $K$ be a finite unramified extension of
$\QQ_p$ with $R$ the integral closure of $\ZZ_p$ in $K$.
Let $G$ be a split simply connected simple Chevalley group over $R$.
Let $P$ be the maximal ideal of $R$ over $p$.  Say $R/P \cong \FF_q$.
For convenience, we assume that $q > 4$.

Let $N_j$ be the congruence kernel of the natural map from
$G(R)$ to $G(R/P^j)$ and set $N=N_1$.

\begin{lem}   \label{lem:lifting}
 Let $x_1, \ldots, x_r \in G(R)$ with $\prod x_i \in N$ and
 set $y_i = x_i \mod N$.  Assume that $\langle y_1, \ldots, y_r \rangle = G/N$.
 Then  there are conjugates $w_i$ of $x_i$ such that
 $\prod w_i = 1$ and $x_iN=w_iN$.  Moreover,
 $\langle x_1,\ldots, x_r \rangle$ and  $\langle w_1, \ldots,  w_r \rangle$
 are dense in $G(R)$ in the $p$-adic topology.
\end{lem}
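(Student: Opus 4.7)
The plan has two parts. First, construct the $w_i$ by a Hensel-style lift from the mod-$P$ identity $\prod y_i = 1$ in $G(\FF_q)$, exploiting surjectivity of a certain differential. Second, deduce density from the irreducibility of the adjoint representation on each layer of the congruence filtration.

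For the lift, I would consider the morphism $\Psi\colon G(R)^r \to G(R)$, $\Psi(g_1,\ldots,g_r) = \prod_i g_i x_i g_i^{-1}$. The hypothesis $\prod x_i \in N$ gives $\Psi(1,\ldots,1) \equiv 1 \pmod P$, so to apply Hensel in its smooth (submersive) form I need surjectivity of the differential at $(1,\ldots,1)$ after reduction modulo $P$. Writing $g_i = 1 + \epsilon v_i$ with $v_i \in \Lie(G)(\FF_q)$ and using $\prod y_i = 1$, a direct expansion yields
$$d\Psi_{(1,\ldots,1)}(v_1,\ldots,v_r) = \sum_{j=1}^{r} \mathrm{Ad}\bigl((y_{j+1}\cdots y_r)^{-1}\bigr)\bigl(\mathrm{Ad}(y_j^{-1}) - 1\bigr) v_j.$$
Dualizing with respect to the $\mathrm{Ad}$-invariant Killing form, the image of this map is all of $\Lie(G)(\FF_q)$ exactly when the simultaneous fixed subspace of the conjugate generators $(y_{j+1}\cdots y_r)^{-1} y_j (y_{j+1}\cdots y_r)$, $j = 1, \ldots, r$, is zero. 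These conjugates generate the same group as the $y_j$, namely $G(\FF_q)$, so the fixed subspace equals $\Lie(G)(\FF_q)^{G(\FF_q)} = 0$, since the adjoint module is nontrivial and irreducible (good $p$, $q > 4$). Hensel now produces $g_i \equiv 1 \pmod P$ with $\prod g_i x_i g_i^{-1} = 1$; set $w_i = g_i x_i g_i^{-1}$, and $w_i \equiv x_i \pmod N$ follows from $g_i \in N$.

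For density, let $H$ denote the $p$-adic closure of $\langle w_1, \ldots, w_r \rangle$; the argument for $\langle x_1, \ldots, x_r \rangle$ is verbatim, since the $y_i$ already generate $G(\FF_q)$. By construction $H$ surjects onto $G/N = G(\FF_q)$. I would prove $H = G(R)$ by induction on the congruence level: assuming $H$ surjects onto $G(R/P^j)$, the kernel $K_j$ of $G(R/P^{j+1}) \twoheadrightarrow G(R/P^j)$ is $\FF_q[G(\FF_q)]$-isomorphic to $\Lie(G)(\FF_q)$ via the adjoint action, and $H \cap K_j$ is a $G(\FF_q)$-invariant subgroup of the irreducible module $K_j$, hence equal to $0$ or $K_j$.

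The principal obstacle is excluding the case $H \cap K_j = 0$. I would handle it by passing to the $p$-adic Lie algebra $\mathfrak{h} := \Lie(H) \subseteq \Lie(G)(\QQ_p)$, which is $\mathrm{Ad}(H)$-stable, hence $\mathrm{Ad}(G(\FF_q))$-stable; a Nakayama argument propagates irreducibility of $\Lie(G)(\FF_q)$ to irreducibility of $\Lie(G)(\QQ_p)$ as a $\QQ_p[G(\FF_q)]$-module, so $\mathfrak{h}$ is either $0$ or $\Lie(G)(\QQ_p)$. The first case would force $H$ to be discrete in the compact group $G(R)$, hence finite, and would produce a section $G(\FF_q) \hookrightarrow G(R)$ of the first congruence quotient---but such sections do not exist for simply connected simple Chevalley groups at good primes (a standard non-splitting result). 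Therefore $\mathfrak{h} = \Lie(G)(\QQ_p)$, $H$ is open of finite index in $G(R)$, and combined with surjectivity onto $G(\FF_q)$ this forces $H = G(R)$, completing the proof.
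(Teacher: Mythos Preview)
Your lifting argument is the paper's own, repackaged as smooth Hensel. The paper does a step-by-step lift from $N_j$ to $N_{j+1}$ followed by a compactness (inverse-limit) argument; both versions rest on the same fact, that $G(\FF_q)$ has no covariants on $N_j/N_{j+1}\cong\Lie(G)(\FF_q)$. You reach it via the Killing form and the vanishing of invariants, which is legitimate here since that form is nondegenerate at good primes for the groups in question.

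For density the two approaches diverge. The paper simply quotes Weigel: $N=N_1$ lies in the Frattini subgroup of $G(R)$, so any closed $H$ with $HN=G(R)$ equals $G(R)$. Your $p$-adic Lie algebra argument has a genuine gap at the end. After you establish $\mathfrak h=\Lie(G)(\QQ_p)$ and hence that $H$ is open, the clause ``combined with surjectivity onto $G(\FF_q)$ this forces $H=G(R)$'' is exactly the Frattini statement you set out to prove; openness does not by itself exclude a proper open subgroup surjecting onto $G(\FF_q)$. Two smaller points: saying $\mathfrak h$ is ``$\mathrm{Ad}(G(\FF_q))$-stable'' is not literally meaningful, since $G(\FF_q)$ does not act on $\Lie(G)(\QQ_p)$---the Nakayama step you intend must pass through the integral lattice and then reduce mod $P$. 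And in the $\mathfrak h=0$ branch, deducing a \emph{section} from $H$ finite requires $H\cap N=1$; this follows because $N$ is torsion-free for odd $p$, but it should be said.

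If you want to avoid citing Weigel, a direct route bypasses Lie algebras entirely: the image of $H\cap N_1$ in the irreducible $G(\FF_q)$-module $N_1/N_2$ is either $0$ or everything. If everything, then since $N_2=\Phi(N_1)$ (as $N_1$ is a uniform pro-$p$ group for odd $p$) one gets $N_1\le H$ and hence $H=G(R)$. If zero, then $HN_2/N_2$ is a complement to $N_1/N_2$ in $G(R)/N_2$, contradicting the non-splitting of $G(R)/N_2\to G(\FF_q)$ that you already invoked.
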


\begin{proof}
By induction and a straightforward compactness argument,
it suffices to assume that $\prod x_i \in N_j$ and then show
that we can choose $n_{ij} \in N_j$ so that
$\prod x_i^{n_{ij}} \in N_{j+1}$.   This follows from the fact that
$\langle y_1, \ldots, y_r \rangle = G/N$ and $G/N$ has no
covariants on $N_j/N_{j+1} \cong \Lie(G/N)$ \cite[3.5]{weigel}.

The fact that $ \langle w_1, \ldots,  w_r \rangle$
is dense in $G(R)$ follows from the fact that
$N$ is contained in the Frattini subgroup of $G(R)$ \cite{weigel}.
\end{proof}

\begin{rem}
If  $y_1,\ldots,y_r\in G(R)/N$ with $\prod y_i = 1$,
the order of $y_i$ prime to $p$
and $\langle y_1, \ldots, y_r \rangle = G/N$, then we can
lift each $y_i$ to an element $x_i \in G(R)$ with $y_i = x_iN$
and so the previous result applies in this case. See
\cite{GT} for a more general result.
\end{rem}

\subsection{Completion of the proof of Theorem \ref{thm:char 0}}
Now return to the set-up in subsection \ref{ss:fields}.
Let $K = \QQ_p$ with $p$ a good prime for $G$.
Let $D_i$, $i = 1, 2, 3$ be the corresponding conjugacy classes in
$G(\FF_p) = G(R)/N$ and let $C_i$ be the classes in $G(\QQ)$.
By Lemma~\ref{lem:lifting}, we can choose $w_i \in C_i \cap G(R)$ with
$w_1w_2w_3=1$. Note that if $w \in Y(R)$, then since
$\Gamma(w):=\langle w\rangle$ is dense
in $G(R)$,  it follows that $G(R)$ acts regularly on those elements in
$X(R)$ which generate a dense subgroup of  $G(R)$
(since $G(R)$ is self normalizing in $G(\QQ_p)$ --- we will not use
this fact in what follows).

We next want to consider integrality questions.
Let $S=\ZZ[1/m]$ where $m$ is the product of the bad primes of $G$.
By Theorem \ref{char0}, we may choose $x \in X(\QQ)$.
Thus, $x \in X(\ZZ[1/N])$ for some positive (squarefree)
integer $N$.  Suppose that some good prime $p$ divides $N$.
By Lemma~\ref{lem:lifting},
we may choose  $y \in X(\ZZ_p)$.   So  $y = g.x$ for some $g \in G(\QQ_p)$.

Note that $G(\QQ_p) = G(\ZZ[1/p])G(\ZZ_p)$ (this is because
$G(\ZZ_p)$ is open in $G(\QQ_p)$ in the $p$-adic topology
and $G(\ZZ[1/p])$ is dense in $G(\QQ_p)$ (since $\ZZ[1/p]$ is dense in
$\QQ_p$ and $G(\QQ_p)$ is generated by root subgroups each isomorphic to
$\QQ_p)$). So write $g = g_1 g_2$ where $g_1 \in G(\ZZ[1/p])$ and
$g_2 \in G(\ZZ_p)$. Thus, $g_1^{-1}.y = g_2.x$, and so
$w:=g_1^{-1}.y = g_2.x \in G(\ZZ[1/N])\cap G(\ZZ_p)=G(\ZZ[1/N'])$
where $N=pN'$. Moreover, we see that $\Gamma(w)$ surjects
onto  $G(\FF_r)$ for any $r$ not dividing $N'$ (because $y$ and so
$g_1^{-1}.y$ have this property and also for $r=p$ since $g_2.x$ has this
property).

Continuing in this manner, we see that we can produce such an embedding
into $G(S)$ as required.  Thus, we have proved Theorem \ref{thm:char 0}.
\medskip

If $G=G_2$, Dettweiler and Reiter \cite{DR10} exhibited a triple in $X(\ZZ)$.
If $G=E_8$, we do not know if the group is in fact conjugate to a
subgroup of $G(\ZZ)$.

Suppose that $x = (x_1, x_2, x_3) \in X(\ZZ)$.  Let $\Gamma= \Gamma(x)$.
Let  $W=\Lie(G(\FF_2))$ and $V= \Lie(G(\CC))$.
It is clear that $\dim [x_i,W] \le \dim [x_i,V]$ and since $x_1$ is an
involution, $\dim [x_1, W] \le (1/2) \dim W < \dim [x_1,V]$.  Thus
$$\sum \dim [x_i,W] < \sum \dim [x_i,V] = 2 \dim W.$$
By  Scott's Lemma \cite{scott} it follows that the image of $\Gamma$ in
$G(\FF_2)$ either has fixed points or covariants on $W$.  Since $G(\FF_2)$
has no fixed points on $W$, it follows that the image of $\Gamma$ is a proper
subgroup of $G(\FF_2)$. Indeed, the same shows that the image of $\Gamma$ is
contained in a proper positive dimensional subgroup of $G(\overline{\FF_2})$.

\section{Remarks on $F_4$}   \label{sec:F4}

Let $k$ be an algebraically closed field of characteristic $p>3$ and
$G=F_4(k)$. Let $C_1$ be the conjugacy class of $G$ consisting of involutions
with centralizer $A_1(k)C_3(k)$, $C_2$ the conjugacy class of unipotent
elements $A_1+\tilde{A_1}$ and $C_3$ the conjugacy class of regular unipotent
elements. We set
$$X:=\{(x,y,z)\in C_1 \times C_2 \times C_3\mid xyz=1\}.$$

The character theory proof goes through for this set of triples showing
that $\dim X = 52$ and there is at most one component of dimension~$52$.
By standard intersection theory, any component of $X$ has dimension at
least $52$, whence:

\begin{prop}   \label{lem:dimf41}
 $X$ is an irreducible variety of dimension $52$.
\end{prop}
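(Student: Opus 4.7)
The plan is to combine the character-theoretic count of $X(\FF_q)$ with the standard intersection-theoretic lower bound on fiber dimensions, exactly as was done implicitly for $E_8$ in the proof of Theorem~\ref{rigid1}. The two ingredients are essentially independent, and the authors already grant us that ``the character theory proof goes through''.

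First I would carry out the character-theoretic count. By the Frobenius formula, for $q=p^f$ with $p$ good for $F_4$,
$$|X(\FF_q)| = \frac{|C_1^F|\,|C_2^F|\,|C_3^F|}{|G^F|}\bigl(1+f(q)\bigr),$$
where $f(q) = \sum_{1\ne\chi}\chi(x)\chi(y)\chi(z)/\chi(1)$. The argument of Theorem~\ref{thm:triples} (built on Lemma~\ref{lem:sum}, Proposition~\ref{prop:valRT} and Corollary~\ref{cor:valuni}, and a mechanical \Chevie{} computation of the relevant Green-function and semisimple-character degree contributions) applies verbatim to this $F_4$-triple, giving $|f(q)|<1$ for all sufficiently large $q$ in a fixed congruence class. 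The three class dimensions are $\dim C_1 = 52-24 = 28$ (centralizer $A_1(k)C_3(k)$ of dimension $3+21$), $\dim C_2 = 28$ (the class $A_1+\tilde A_1$), and $\dim C_3 = 52-4 = 48$ (regular unipotent). Thus $|C_1^F|\,|C_2^F|\,|C_3^F|/|G^F|$ is a rational function of $q$ with leading term $q^{28+28+48-52}=q^{52}$, and so $|X(\FF_q)| = q^{52}+O(q^{51})$.

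Next I would invoke the Lang--Weil estimates: an $\FF_p$-variety of dimension $d$ whose top-dimensional part has $c$ geometrically irreducible components satisfies $|X(\FF_q)| = cq^d+O(q^{d-1/2})$. Comparing, $X$ has dimension at most $52$ and admits at most one geometrically irreducible component of dimension $52$. In the other direction, consider the multiplication morphism
$$\mu:C_1\times C_2\times C_3\longrightarrow G,\qquad (x,y,z)\mapsto xyz.$$
The source has dimension $104=2\dim G$, so every non-empty fiber of $\mu$ has each of its components of dimension at least $104-\dim G = 52$. Since the structure-constant count is positive, $X=\mu^{-1}(1)$ is non-empty, hence every component of $X$ has dimension at least $52$. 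Combining the two bounds, every component of $X$ has dimension exactly $52$ and there is a unique geometrically irreducible component of that dimension, so $X$ is geometrically irreducible of dimension $52$.

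The main potential obstacle is the verification that the sum-of-degrees bookkeeping in the proof of Theorem~\ref{thm:triples} actually yields $d_u(A)+d_s(A)+d(A)<0$ (respectively $<1$ in the non-vanishing case) for every equivalence class $A$ of semisimple elements of the dual group of $F_4$; but this is the content of the remark preceding the proposition that ``the character theory proof goes through''. Granting that, the remaining content is the purely formal intersection-theoretic combination of the two dimension bounds carried out above.
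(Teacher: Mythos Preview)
Your proposal is correct and follows essentially the same approach as the paper: character theory (as in Theorem~\ref{thm:triples}) to bound $\dim X\le 52$ with at most one top-dimensional component, combined with the standard intersection-theoretic lower bound on fiber dimensions to force every component to have dimension exactly $52$. The paper states this in a single sentence before the proposition; your version simply expands the details (computing $\dim C_i$, invoking Lang--Weil explicitly, and writing out the fiber-dimension inequality).
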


If $x=(x_1, x_2, x_3) \in X$, let $\Gamma(x) = \langle x_1, x_2 \rangle$. 

Unfortunately, no triple in $X$ generates an $F_4(p)$ because elements of $C_1$ have
a $14$-dimensional fixed space on the $26$-dimensional module $V$ for
$G$ and elements of $C_2$ have a $14$ dimensional fixed space on $V$
\cite{Law95}. Thus if $x_i \in C_i$, $\langle x_1, x_2 \rangle$ has at least
a $2$-dimensional fixed space on $V$.  Since $x_3$ has a $2$-dimensional
fixed space on $V$, this is precisely the fixed space of $\Gamma(x)$.
Choose a $B_3$-parabolic subgroup $P$ containing $x_3$.  Then
$P$ has a unique $1$-dimensional invariant space on $V$, whence
it follows that $\Gamma(x) < P$.  

We can show:

\begin{thm} \label{thm:f4triple}
 If $(x_1, x_2, x_3) \in X$, then $\langle x_1, x_2, x_3 \rangle = RG_2(p)$
 where $R$ is nilpotent of class $2$ and has order $p^{14}$.   Moreover,
 $X$ is a single regular $G$-orbit. 
\end{thm}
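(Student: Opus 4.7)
The plan is to exploit the reduction, established in the paragraph preceding the theorem, that $\Gamma(x) := \langle x_1, x_2 \rangle$ is contained in the unique $B_3$-parabolic $P$ of $G = F_4(k)$ that contains the regular unipotent $x_3$. Write $P = LQ$ with derived Levi $[L,L] \cong \Spin_7(k)$ and unipotent radical $Q$ of dimension $15$, and let $\pi\colon P \twoheadrightarrow L/Z(L)^\circ \cong \Spin_7(k)$ denote projection onto the semisimple part. The strategy proceeds in three stages: first, identify $\pi(\Gamma(x))$ as a copy of $G_2(p)$; then reconstruct $R := \Gamma(x) \cap Q$ as a distinguished $G_2(p)$-invariant subgroup of $Q$ of order $p^{14}$; finally, deduce from dimensions that $X$ is a single regular $G$-orbit.

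For the first stage, $\pi(x_3)$ is automatically regular unipotent in $\Spin_7(k)$ (as $Q$ contains no regular unipotent elements of $G$). Using Lawther's fusion tables \cite{Law95} one identifies the images $\pi(x_2)$ and $\pi(x_1)$ as $\Spin_7$-classes that meet the natural subgroup $G_2(k) \hookrightarrow \Spin_7(k)$ (coming from the $7$-dimensional irreducible of $G_2$) in precisely the $G_2$-classes of Table~\ref{tab:triples}. The main task is then to show that $\pi(\Gamma(x))$ actually sits inside such a $G_2(k)$: apply \cite[Thm.~A]{SS97} to $\Spin_7(k)$ to list maximal closed positive-dimensional subgroups containing a regular unipotent element, use a maximal-subgroup analysis for finite Lie primitives of $\Spin_7(k)$ analogous to Theorem~\ref{lieprim}, and exclude proper parabolics of $\Spin_7(k)$ on the grounds that these would force $\Gamma(x)$ into a smaller parabolic of $G$. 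This leaves only $G_2(k)$, and Theorem~\ref{thm:rigidity} applied to $G_2$ then pins down $\pi(\Gamma(x)) = G_2(p)$.

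For the structure of $R$, invoke the standard description of the unipotent radical of the $B_3$-parabolic of $F_4$ as a $2$-step nilpotent $B_3$-group: the grading on $Q$ by the coefficient of the removed short simple root gives a short exact sequence $1 \to Z(Q) \to Q \to Q/Z(Q) \to 1$ in which, as $B_3$-modules, $Z(Q) = [Q,Q]$ is the $7$-dimensional natural module and $Q/Z(Q)$ is the $8$-dimensional spin module. Restricted to the algebraic $G_2 \le B_3$, the natural module remains irreducible as $V_7$ while the spin module decomposes as $V_7 \oplus \mathbf{1}$. Since $R$ is a proper $G_2(p)$-stable subgroup of $Q(\FF_p)$ of order $p^{15}$, the only possibility compatible with the $G_2$-submodule lattice of $Q/Z(Q)$ is the preimage of the $V_7$-summand, forcing $|R| = p^{14}$. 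As $[R,R] \le Z(Q)$ the group has nilpotency class at most $2$, with equality coming from the non-vanishing of the $G_2$-equivariant alternating commutator map $\Lambda^2 V_7 \to V_7$ obtained from the bracket on $Q$. Hence $\Gamma(x) = R\cdot G_2(p)$.

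For the final statement, $X$ is irreducible of dimension $52 = \dim G$ by Proposition~\ref{lem:dimf41}, so it suffices to show the $G$-stabilizer of any triple in $X$ is trivial. A stabilizing element centralizes $\Gamma(x)$, hence projects to a centralizer of $G_2(p)$ in $\Spin_7(k)$; since $G_2(p)$ acts irreducibly on the $7$-dimensional natural module for $p > 5$, this centralizer is trivial, so the stabilizer lies inside $Q$; there it centralizes the regular unipotent $x_3$ and is thus trivial. Consequently every $G$-orbit on $X$ has dimension $52$, and irreducibility of $X$ forces it to be a single regular $G$-orbit. The main obstacle is the module-theoretic identification of $R$ in the third paragraph: verifying the $B_3$-module structure of $Q$, restricting cleanly to $G_2$, and confirming the commutator map is non-zero so that class exactly $2$ holds rather than $R$ being abelian.
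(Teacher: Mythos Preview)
Your overall architecture matches the paper's: reduce to the $B_3$-parabolic $P=QL$, identify the image in $L$ as $G_2(p)$ via the $G_2$ rigidity, analyse $\Gamma(x)\cap Q$ through the $G_2$-module structure of $Q$, and finish with a centraliser computation. However, several steps contain genuine gaps.

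First, in forcing $\pi(\Gamma(x))\le G_2(k)$ you list the alternatives as ``positive-dimensional via \cite{SS97}'', ``Lie primitive'', and ``parabolic''. This omits the subfield groups $B_3(q)$, which are neither Lie primitive nor contained in a proper positive-dimensional subgroup. The paper closes this case by Scott's Lemma on the $8$-dimensional spin module $W$: one checks $\sum_i\dim[x_iQ,W]<16$, so $\pi(\Gamma(x))$ cannot act irreducibly on $W$, ruling out $B_3(q)$ and forcing containment in $G_2(k)$.

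Second, your determination of $R=\Gamma(x)\cap Q$ is too quick. You have not explained why $R$ lies in $Q(\FF_p)$ (the triple is only in $G(k)$), why $R$ is non-trivial, or why $R$ is \emph{proper} rather than all of $Q(\FF_p)$. The submodule lattice alone allows several sizes. The paper handles this by first \emph{constructing} an explicit triple from the $G_2$ rigid triple, conjugating inside $Q(\FF_p)$ to land in $X$, and computing directly that the resulting group is $[G_2(p),Q(\FF_p)]\cdot G_2(p)$ of order $p^{14}|G_2(p)|$; only afterwards does it argue that an arbitrary triple has the same shape.

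Third, and most concretely, your centraliser argument fails at the last step. You correctly place the stabiliser $C$ inside $Q$, but then assert that $C$ centralises the regular unipotent $x_3$ ``and is thus trivial''. In fact $C_Q(x_3)\ne 1$: since $C_G(x_3)$ is $4$-dimensional and its image in $L$ (centralising the regular unipotent $\pi(x_3)$ of $B_3$) is at most $3$-dimensional, the kernel $C_Q(x_3)$ has dimension at least $1$. The paper's argument here is more delicate: one uses that $C$ is centralised by $G_2(p)$ (hence $C\cap[G_2(p),Q]=1$), that the central torus $T$ of $L$ normalises $C$ and acts fixed-point-freely on $Q$ (forcing $C$ to be positive-dimensional if non-trivial), and then that $Q=[G_2(p),Q]\cdot C$ leads to $C\le Z(Q)\le[G_2(p),Q]$, a contradiction.
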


\begin{proof} 
Consider $G_2(k) < B_3(k) < QB_3(k) < P < G$ 
where $P$ is a maximal parabolic subgroup and 
$Q$ is the unipotent radical of $P$. 

By \cite[Prop.~4.5]{CKS}, $[Q,Q]$ is the natural $7$-dimensional 
module for $B_3(k)$ and $A :=Q/ [Q,Q]$ is the $8$-dimensional 
spin module for $B_3(k)$. Since $G_2(k)$ has only nontrivial 
irreducible modules of dimension $7$ or dimension at least $14$, 
it follows that  as $G_2(k)$-modules, $[Q,Q]$ is irreducible and
$A \cong k \oplus B$ with $B$ a $7$-dimensional irreducible module for $G_2(k)$ 
(it must split because $A$ is self dual). Note that
an element of $Q$ fixed by $G_2(k)$ (even modulo $[Q,Q]$) 
is not of the form $u_4(t)$ for some $t \ne 0$ (because 
the stabilizer of such an element is a maximal parabolic 
subgroup of $B_3(k)$ and so does not contain $G_2(k)$). 

Let $x \in G_2(k)$ be an involution, $y \in G_2(k)$ a unipotent element
in the class $\tilde{A} _1$ and $z$ in the class of regular unipotent elements
of $G_2(k)$ with $xyz=1$. Then,  by the rigidity result for $G_2(k)$,
$\langle x, y \rangle \cong G_2(p)$ and so we may assume that
$\langle x, y \rangle = G_2(p)$.  Moreover, by conjugating in $G_2(p)$,
we may assume that $z = u_1(1)u_2(1)u_3(1)$ where 
$u_i( t ), 1 \le i \le 3$, are the root subgroups corresponding 
to the simple roots of $G$ inside $B_3(k)$. 

It is straightforward to see that $\dim C_G(x)=24$ and that $y$ has
the same Jordan block structure on the adjoint module for $G$
as do elements in $C_2$.    This implies that $x \in C_1$
and $y \in C_2$. 

By the remarks above, $[G_2(p), Q]$ is a codimension $1$ subgroup 
of $Q$.  Indeed, setting $R=Q(p)$, we see that $R_0 :=[G_2(p), R]$ 
has order $p^{14}$ and has index $p$ in $Q(p)$. It follows 
easily that every element of $R_0$ can be written as $[x,q_1][y,q_2]$ 
for some $q_1, q_2 \in Q(p)$. In particular, $u_4(-1) = [x,q_1] [y,q_2]v$ 
where $q_1, q_2 \in Q(p)$ and 
$v$ is a product of root elements in $R$ corresponding to nonsimple 
roots. Let $q_3 \in R$ with $x^{q_1}y^{q_2}(q_3z) =1$. 
Then $q_3z= \prod_{i=1}^4 u_i(1) v'$ where $v'$ is a 
product of root elements in $R$ corresponding to nonsimple roots. 
In particular $q_3z$ is a regular unipotent element of $G$. 

Thus, we have produced a triple $y:=(y_1, y_2, y_3) 
= (x^{q_1}, y^{q_2}, q_3z) \in X$. Note that $H := \Gamma(y)
\le [R,G_2(p)]G_2(p)$. Since $H$ contains a regular unipotent element, 
$H[R,R]/[R,R]$ intersects $R/[R,R]$ nontrivially. The argument 
above shows that $x$ does not act trivially on this intersection, 
whence $H[R,R]/[R,R]$ contains the hyperplane $R_0[R,R]/[R,R]$ of $R/[R,R]$. 
Note that $H \cap [R,R] \ne 1$ for otherwise $H \cap [R,R]$ is abelian 
of order at least $p^7$ and centralizes $[R,R] < Z(Q)$. Then 
$ (H \cap R) [R,R]$ is abelian of order $p^{14}$ (and this is 
not possible, either by inspection or by  \cite[Table 3.3.1]{GLS3}). 
Since $H$ acts irreducibly on $[R,R]$, this implies that $[R,R] \le H$. 
Thus, $H \cap R = [G_2(p),R]$ has index $p$ in $R$. 

We next claim that $C:=C_G(H)=1$.   Suppose not.
 Since $G_2(p)$ is self centralizing 
in $B_3(k)$ and $C \le C_G(y_3) < P$, it follows that $C \le Q$.
Since $G_2(p)$ acts without fixed points on $[G_2(p),Q]$, it follows
that  $C \cap [G_2(p),Q]=1$.   Let $T$ be the torus centralizing $B_3(k)$.
Then $T$ normalizes $H$ (because it centralizes $G_2(k)$ and normalizes
$Q$).   Thus,  $T$ also normalizes $C$.  Since $C_Q(T)=1$, it follows that
$C$ has positive dimension and that $Q=[G_2(p),Q]C$, whence
$C$ centralizes $Q$.  Thus, $C \le Z(Q)=[Q,Q]$, a contradiction. 

We next show that any $x =(x_1, x_2, x_3) 
\in X$ is as above.   As noted, 
we may assume that $x_3 \in P$ and so $H \le P_3$.

Arguing as in the $E_8$ case, we see that $HQ/Q$ cannot be contained 
in a parabolic subgroup of $B_3(k)$. It follows easily from the fact
that $HQ/Q$ contains a regular unipotent element of $B_3(k)$  that 
either $HQ/O$ contains a conjugate of $B_3(p)$ or is contained 
in $G_2(k)$. Arguing as above, we see that in 
$HQ/Q$,  the $x_iQ$ are precisely in the rigid classes for $G_2$ 
(inside $B_3$ ). Note that on the $8$-dimensional module $W$ for 
$B_3$, $\sum \dim [x_i,W] < 16$, whence by Scott's 
Lemma, $H$ does not act irreducibly on $W$ and so 
$HQ/Q  \le G_2(k)$.  By the rigidity result for $G_2(k)$, 
this implies that $HQ/Q  \cong G_2(p)$. Now we argue as above 
to conclude that $H=[R,G_2(p)]G_2(p)$ and has trivial centralizer 
in $G$. 

By Proposition~\ref{lem:dimf41} we conclude that since $X$ is an irreducible
variety of dimension $52$ and every orbit of $G$ on $X$ has dimension $52$,
$X$ is a single $G$ -orbit. 

\end{proof}

There are several other candidates for rigid triples (satisfying the
necessary condition that $\sum\dim C_i = 2 \dim G$) but in all cases there
seem to be technical difficulties in establishing either the character results
we need or the generation result or both.  In all cases, $C_3$ will be the
regular unipotent class. The possibilities are:

\begin{enumerate}
\item  $C_1$ consists of involutions of type $B_4$ and $C_2$
  consists of unipotent elements of type $F_4(A_3)$.
\item  $C_1$ consists of involutions of type $A_1C_3$ and
  $C_2$ is the class of elements which are a commuting product
  of a $B_4$-involution and a long root element.
\item $C_1$ consists of unipotent elements of type $A_1+\tilde A_1$ and
  $C_2$ is the class of elements which are a commuting
  product of a $B_4$-involution and a long root element.
\end{enumerate}

The second  triple was suggested by Yun.

\end{document}